\newtheorem{definition}{Definition}[section]
\newtheorem{proposition}[definition]{Proposition}
\newtheorem{theorem}[definition]{Theorem}
\newtheorem{lemma}[definition]{Lemma}
\newtheorem{corollary}[definition]{Corollary}
\newtheorem{remark}[definition]{Remark}
\title[Semilinear damped wave equations]{Small data global existence for the semilinear wave equation
with space-time dependent damping}
\author{Yuta WAKASUGI}
\address{Department of Mathematics, Graduate School of Science, Osaka University, Osaka, Toyonaka, 560-0043, Japan}
\email{y-wakasugi@cr.math.sci.osaka-u.ac.jp}
\keywords{semilinear damped wave equations; critical exponent; small data global existence}
\begin{document}
\begin{abstract}
In this paper we consider the critical exponent problem for the semilinear wave equation
with space-time dependent damping. 
When the damping is effective, it is expected that the critical exponent agrees with
that of only space dependent coefficient case.
We shall prove that there exists a unique global solution for small data if the power of nonlinearity
is larger than the expected exponent.
Moreover, we do not assume that the data are compactly supported.
However, it is still open whether there exists a blow-up solution
if the power of nonlinearity is smaller than the expected exponent.
\end{abstract}

\maketitle
\section{Introduction}
We consider the Cauchy problem for the semilinear damped wave equation
\begin{equation}
\begin{cases}
	u_{tt}-\Delta u+a(x)b(t)u_t=f(u),\quad (t,x)\in (0,\infty)\times\mathbf{R}^n,\\
	u(0,x)=u_0(x),\quad u_t(0,x)=u_1(x),\quad x\in\mathbf{R}^n,
\end{cases}\label{eq11}
\end{equation}
where the coefficients of damping are
$$
	a(x)= a_0\langle x\rangle^{-\alpha},\quad b(t)=(1+t)^{-\beta},
	\quad \mbox{with}\ a_0>0,\alpha,\beta\ge 0, \alpha+\beta <1,
$$
where $\langle x\rangle=(1+|x|^2)^{1/2}$.
Here $u$ is a real-valued unknown function and
$(u_0,u_1)$ is in $H^1(\mathbf{R}^n)\times L^2(\mathbf{R}^n)$.
We note that $u_0$ and $u_1$ need not be compactly supported.
The nonlinear term $f(u)$ is given by
$$
	f(u)=\pm |u|^p\quad\mbox{or}\quad |u|^{p-1}u
$$
and the power $p$ satisfies
$$
	1<p\le\frac{n}{n-2}\quad (n\ge 3),\quad 1<p<\infty\quad (n=1,2).
$$
Our aim is to determine the critical exponent $p_c$, which is a number defined by the following property:

If $p_c<p$, all small data solutions of (\ref{eq11}) are global;
if $1<p\le p_c$, the time-local solution cannot be extended time-globally for some data.

It is expected that the critical exponent of (\ref{eq11}) is given by
$$
	p_c=1+\frac{2}{n-\alpha}.
$$
In this paper we shall prove the existence of global solutions with small data when
$p>1+2/(n-\alpha)$.
However, it is still open whether there exists a blow-up solution when $1<p\le 1+2/(n-\alpha)$.

When the damping term is missing and $f(u)=|u|^p$, that is
\begin{equation}
\begin{cases}
	u_{tt}-\Delta u=|u|^p,\quad (t,x)\in (0,\infty)\times\mathbf{R}^n,\\
	u(0,x)=u_0(x),\quad u_t(0,x)=u_1(x),\quad x\in\mathbf{R}^n,
\end{cases}\label{eq12}
\end{equation}
it is well known that the critical exponent $p_w(n)$ is the positive root of
$(n-1)p^2-(n+1)p-2=0$ for $n\ge 2\ (p_w(1)=\infty)$.
This is the famous Strauss conjecture and the proof is completed by the effort of many mathematicians
(see \cite{J,G,Si,St1,St2,Zhou,LS,GLS,R,T}).

For the linear wave equation with a damping term
\begin{equation}
\begin{cases}
	u_{tt}-\Delta u+c(t,x)u_t=0,\quad (t,x)\in (0,\infty)\times\mathbf{R}^n,\\
	u(0,x)=u_0(x),\quad u_t(0,x)=u_1(x),\quad x\in\mathbf{R}^n,
\end{cases}\label{eq13}
\end{equation}
there are many results about the asymptotic behavior of the solution.
When $c(t,x)=c_0>0$ and $(u_0, u_1)\in (H^1\cap L^1)\times(L^2\cap L^1)$,
Matsumura \cite{M} showed that the energy of solutions
decays at the same rate as the corresponding heat equation.
When the space dimension is $3$, using the exact expression of the solution,
Nishihara \cite{N1} discovered that the solution of (\ref{eq13}) with $c(t,x)=1$ is expressed asymptotically by
$$
	u(t,x)\sim v(t,x)+e^{-t/2}w(t,x),
$$
where $v(t,x)$ is the solution of the corresponding heat equation
$$
	\begin{cases}
	v_t-\Delta v=0,\quad (t,x)\in (0,\infty)\times\mathbf{R}^3,\\
	v(0,x)=u_0(x)+u_1(x),\quad x\in\mathbf{R}^3
	\end{cases}
$$
and $w(t,x)$ is the solution of the free wave equation
$$
	\begin{cases}
	w_{tt}-\Delta w=0,\quad (t,x)\in (0,\infty)\times\mathbf{R}^3,\\
	w(0,x)=u_0(x),\quad w_t(0,x)=u_1(x),\quad\in \mathbf{R}^3.
	\end{cases}
$$
These results indicate a diffusive structure of damped wave equations.
On the other hand, Mochizuki \cite{Mo} showed that if 
$0\le c(t,x)\le C(1+|x|)^{-1-\delta}$, where $\delta >0$, then
the energy of solutions of (\ref{eq13}) does not decay to $0$ for nonzero data and the solution is asymptotically free.
We can interpret this result as (\ref{eq13}) loses its \textquotedblleft parabolicity\textquotedblright
and recover its \textquotedblleft hyperbolicity\textquotedblright.
Wirth \cite{W1,W2} treated time-dependent damping case, that is $c(t,x)=b(t)$ in (\ref{eq13}).
By the Fourier transform method,
he got several sharp $L^p-L^q$ estimates of the solution and
showed that there exists diffusive structure
for general $b(t)$ including $b(t)=b_0(1+t)^{-\beta} (-1<\beta<1)$.
Todorova and Yordanov \cite{TY3} considered the case $c(t,x)=a(x)=a_0\langle x\rangle^{-\alpha}$
with $\alpha\in [0,1)$ and J. S. Kenigson and J. J. Kenigson \cite{KK} 
considered space-time dependent coefficient case
$c(t,x)=a(x)b(t), a(x)=a_0\langle x\rangle^{-\alpha}, b(t)=(1+t)^{-\beta}, (0\le\alpha+\beta<1)$.
They established the energy decay estimate
that also implies diffusive structure even in the decaying coefficient cases.
From these results, the decay rate $-1$ of the coefficient of the damping term is the threshold of parabolicity.
This is the reason why we assume $\alpha+\beta<1$ for (\ref{eq11}).
We mention that recently, Ikehata, Todorova and Yordanov \cite{ITY2}
treated the case $c(t,x)=a_0\langle x\rangle^{-1}$ and obtained almost optimal decay estimates.

There are also many results for the semilinear damped wave equation with absorbing semilinear term:
\begin{equation}
	\begin{cases}
	u_{tt}-\Delta u+a(x)b(t)u_t+|u|^{p-1}u=0,\quad (t,x)\in (0,\infty)\times\mathbf{R}^n,\\
	u(0,x)=u_0(x),\quad u_t(0,x)=u_1(x),\quad x\in\mathbf{R}^n,
	\end{cases}\label{eq14}
\end{equation}
It is well known that there exists a unique global solution even for large initial data.
When $a(x)b(t)=1$, that is constant coefficient case, 
Kawashima, Nakao and Ono \cite{KNO},
Karch \cite{K}, 
Hayashi, Kaikina and Naumkin \cite{HKN4},
Ikehata, Nishihara and Zhao \cite{INZ}
and Nishihara \cite{N2}
showed global existence of solutions and
that their asymptotic profile is given by a constant multiple of the Gauss kernel for
$1+2/n<p$ and $n\le 4$.
For $1<p\le 1+2/n$,
Nishihara and Zhao \cite{NZhao},
Ikehata, Nishihara and Zhao \cite{INZ},
Nishihara \cite{N2}
proved that the decay rate of the solution agrees with
that of a self-similar solution of the corresponding heat equation. 
Hayashi, Kaikina and Naumkin \cite{HKN1,HKN2,HKN3,HKN4}
proved the large time asymptotic formulas in terms of the weighted Sobolev spaces.
These results indicate the critical exponent for (\ref{eq14}) with $a(x)b(t)=1$ is given by $p_c=1+\frac{2}{n}$.
In this case the critical exponent means the turning point of the asymptotic behavior of the solution.
When $b(t)=1, a(x)=\langle x\rangle^{-\alpha} (0\le\alpha<1)$,
namely space-dependent damping case, Nishihara \cite{N3} established
decay estimates of solutions and conjectured the critical exponent is given by
$p_c=1+2/(n-\alpha)$.
When $a(x)=1, b(t)=(1+t)^{-\beta} (-1<\beta<1)$, Nishihara and Zhai \cite{NZhai} proved decay estimates
of solutions and conjectured the critical exponent is $p_c=1+2/n$.
Finally in the case $a(x)=\langle x\rangle^{-\alpha}, b(t)=(1+t)^{-\beta} (0\le \alpha+\beta<1)$, 
Lin, Nishihara and Zhai \cite{LNZ1,LNZ2} showed decay estimates of the solution
and conjectured the critical exponent is $p_c=1+2/(n-\alpha)$.
They used a weighted energy method, which is originally developed by Todorova and Yordanov \cite{TY1,TY2}.
In this paper we shall essentially use the techniques and method that they used.

Li and Zhou \cite{LiZ} considered the semilinear damped wave equation
\begin{equation}
	u_{tt}-\Delta u+u_t=|u|^p.\label{eq15}
\end{equation}
They proved that if $n\le 2, 1<p\le 1+\frac{2}{n}$ and the data are positive on average,
then the local solution of (\ref{eq15}) must blow up in a finite time.
Todorova and Yordanov \cite{TY1,TY2}
developed a weighted energy method using the function which has the form $e^{2\psi}$
and determined that the critical exponent of (\ref{eq15}) is
$$
	p_c=1+\frac{2}{n},
$$
which is well known as Fujita's critical exponent for the heat equation
$u_t-\Delta u=u^{p}$ (see \cite{F}).
More precisely, they proved small data global existence in the case $p>1+2/n$
and blow-up for all solutions of (\ref{eq15}) with positive on average data in the case $1<p<1+2/n$.
Later on Zhang \cite{Zhang} showed that the critical exponent $p=1+2/n$ belongs to the blow-up region.
We mention that Todorova and Yordanov \cite{TY1,TY2}
assumed data have compact support and essentially used this property.
However, Ikehata and Tanizawa \cite{IT} removed this assumption.
Ikehata, Todorova and Yordanov \cite{ITY} investigated the space-dependent coefficient case:
\begin{equation}
	u_{tt}-\Delta u+a(x)u_t=|u|^p,\label{eq16}
\end{equation}
where
$$
	a(x)\sim a_0\langle x\rangle^{-\alpha}, |x|\rightarrow \infty,
	\quad \mbox{radially symmetric and} \ 0\le\alpha <1.
$$
They proved that the critical exponent of (\ref{eq15}) is given by
$$
	p_c=1+\frac{2}{n-\alpha}
$$
by using a refined multiplier method. Their method also depends on the finite propagation speed property. 
Recently, Nishihara \cite{N4} and Lin, Nishihara and Zhai \cite{LNZ2}
considered the semilinear wave equation with time-dependent damping
\begin{equation}
	u_{tt}-\Delta u+b(t)u_t=|u|^p,\label{eq17}
\end{equation}
where
$$
	b(t)=b_0(1+t)^{-\beta}, \quad \beta\in (-1,1).
$$
They proved that the critical exponent of (\ref{eq17}) is
$$
	p_c=1+\frac{2}{n}.
$$
This shows that, roughly speaking, time-dependent coefficients of damping term do not 
influence the critical exponent.
Therefore we expect that the critical exponent of the semilinear wave equation (\ref{eq11}) is
$$
	p_c=1+\frac{2}{n-\alpha}.
$$
To state our results, we introduce an auxiliary function
\begin{equation}
	\psi(t,x):=A\frac{\langle x\rangle^{2-\alpha}}{(1+t)^{1+\beta}}\label{eq22}
\end{equation}
with
\begin{equation}
	A=\frac{(1+\beta)a_0}{(2-\alpha)^2(2+\delta)},\quad \delta>0\label{eq23}
\end{equation}
This type of weight function is first introduced by Ikehata and Tanizawa \cite{IT}.
We have the following result:
\begin{theorem}\label{thm1}
If
$$
	p>1+\frac{2}{n-\alpha},
$$
then there exists a small positive number $\delta_0>0$ such that
for any $0<\delta\le\delta_0$ the following holds:
If
$$
	I_0^2:=\int_{\mathbf{R}^n}
		e^{2\psi(0,x)}
		(u_1^2+|\nabla u_0|^2+|u_0|^2)dx
$$
is sufficiently small,
then there exists a unique solution
$u\in C([0,\infty);H^1(\mathbf{R}^n))\cap C^1([0,\infty);L^2(\mathbf{R}^n))$
to $(\ref{eq11})$ satisfying
\begin{eqnarray}
	\int_{\mathbf{R}^n}e^{2\psi(t,x)}|u(t,x)|^2dx
	&\le& C_{\delta}(1+t)^{-(1+\beta)\frac{n-2\alpha}{2-\alpha}+\varepsilon},\label{eq18}\\
	\int_{\mathbf{R}^n}e^{2\psi(t,x)}(|u_t(t,x)|^2+|\nabla u(t,x)|^2)dx
	&\le&C_{\delta}
		(1+t)^{-(1+\beta)(\frac{n-\alpha}{2-\alpha}+1)+\varepsilon},\nonumber
\end{eqnarray}
where 
\begin{equation}
	\varepsilon=\varepsilon(\delta):=\frac{3(1+\beta)(n-\alpha)}{2(2-\alpha)(2+\delta)}\delta\label{eqepsilon}
\end{equation}
and $C_{\delta}$ is a constant depending on $\delta$.
\end{theorem}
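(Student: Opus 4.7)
The overall plan is the standard three-step procedure: local existence in $C([0,T];H^1)\cap C^1([0,T];L^2)$ via contraction mapping, a priori weighted energy estimates decaying at the rates in (\ref{eq18}), and a continuation argument to push $T$ to infinity. Local existence itself is classical given the Sobolev restriction $p\le n/(n-2)$ for $n\ge 3$ and the smoothness of the coefficients; the point is that the construction works directly in the weighted setting, so that noncompactly supported data are permitted as long as $I_0<\infty$. This circumvents the compact support assumption used by Todorova--Yordanov \cite{TY1,TY2} and Ikehata--Todorova--Yordanov \cite{ITY}, following the treatment of Ikehata--Tanizawa \cite{IT}.

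The core computation is the weighted energy identity obtained by multiplying (\ref{eq11}) by $e^{2\psi}u_t$ and integrating by parts:
\begin{equation*}
\frac{d}{dt}\int_{\mathbf{R}^n}\frac{e^{2\psi}}{2}(u_t^2+|\nabla u|^2)\,dx+\int(a(x)b(t)-\partial_t\psi)e^{2\psi}u_t^2\,dx-2\int e^{2\psi}(\nabla\psi\cdot\nabla u)u_t\,dx-\int\partial_t\psi\,e^{2\psi}|\nabla u|^2\,dx=\int e^{2\psi}f(u)u_t\,dx.
\end{equation*}
Controlling the sign-indefinite cross term by $|2(\nabla\psi\cdot\nabla u)u_t|\le(1-\theta)a(x)b(t)u_t^2+(1-\theta)^{-1}(|\nabla\psi|^2/(a(x)b(t)))|\nabla u|^2$, the crucial algebraic identity is
\begin{equation*}
\frac{|\nabla\psi|^2}{-a(x)b(t)\,\partial_t\psi}=\frac{A(2-\alpha)^2|x|^2}{a_0(1+\beta)\langle x\rangle^2}\le\frac{1}{2+\delta},
\end{equation*}
which is exactly the role of the calibration (\ref{eq23}). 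For $\theta>0$ small, the bulk linear terms therefore retain a $\delta$-fraction of positive dissipation bounded below by $c_\delta\int(-\partial_t\psi)e^{2\psi}(\theta u_t^2+|\nabla u|^2)\,dx$. A parallel computation with multiplier $e^{2\psi}u$ yields a corresponding bound on $E_0(t):=\int e^{2\psi}u^2\,dx$, provided one adds a small multiple of this identity to the $u_t$-identity to borrow dissipation. Because $-\partial_t\psi=(1+\beta)\psi/(1+t)$, the dissipation acts in Gronwall fashion as $(\text{const})/(1+t)$ times a weighted quantity, and this is what produces the polynomial decay rates in (\ref{eq18}); the additive $\varepsilon$ in (\ref{eqepsilon}) reflects the slack $1/(2+\delta)$.

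The main obstacle is absorbing the nonlinear term $\int e^{2\psi}f(u)u_t\,dx$ (and its $u$-multiplier analogue). After Cauchy--Schwarz this reduces to estimating $\int(e^{2\psi}/(a(x)b(t)))|u|^{2p}\,dx\lesssim(1+t)^\beta\int\langle x\rangle^\alpha e^{2\psi}|u|^{2p}\,dx$, which I plan to bound by a weighted Gagliardo--Nirenberg interpolation between $E_0$ and $E_1$ in the style of Lin--Nishihara--Zhai \cite{LNZ1,LNZ2}: substituting $v=e^{\psi}u$ converts the weighted norms into standard ones at the cost of commutator terms with $\nabla\psi$, which are themselves controlled by the dissipation. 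Plugging the target decay rates into this interpolation shows that the nonlinear contribution decays strictly faster than the linear dissipation precisely when $p>1+2/(n-\alpha)$, which is where the critical exponent emerges. Closing the estimate is then a bootstrap: setting $M(T):=\sup_{0\le t\le T}[(1+t)^{\gamma_0-\varepsilon}E_0(t)+(1+t)^{\gamma_1-\varepsilon}E_1(t)]$ with $\gamma_0,\gamma_1$ the exponents appearing in (\ref{eq18}), one derives $M(T)\le CI_0^2+CM(T)^p$, which for $I_0$ sufficiently small forces $M(T)\le 2CI_0^2$ uniformly in $T$. The most delicate technical point is to verify that the exponents balance so that the $\varepsilon$-loss does not consume the gap $p-(1+2/(n-\alpha))>0$; this is precisely why $\delta$ must be chosen small depending on $p$, explaining the condition $\delta\le\delta_0$ in the statement.
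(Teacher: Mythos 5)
Your skeleton (weighted multipliers $e^{2\psi}u_t$ and $e^{2\psi}u$, the calibration $(-\psi_t)a(x)b(t)\ge(2+\delta)|\nabla\psi|^2$ coming from the choice of $A$, Gagliardo--Nirenberg interpolation of the nonlinearity, and a bootstrap $M(T)\le CI_0^2+CM(T)^{q}$ with $q>1$) matches the paper. But there is a genuine gap at the step where you ``add a small multiple of the $u$-identity to the $u_t$-identity to borrow dissipation.'' The $u$-multiplier identity carries a term $-u_t^2$ with the wrong sign, and the only positive $u_t^2$-term available from the $u_t$-identity is $\tfrac14 a(x)b(t)u_t^2$, whose coefficient $a_0\langle x\rangle^{-\alpha}(1+t)^{-\beta}$ degenerates both at spatial infinity and in time. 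A fixed small constant $\nu$ therefore cannot be absorbed, and a variable $\nu\sim a(x)b(t)$ would degrade the $|\nabla u|^2$-gain to a degenerate one, which is useless for absorbing the negative term $-(B+1-\varepsilon)\int_0^t(t_0+\tau)^{B-\varepsilon}E(\tau)\,d\tau$ produced by differentiating the polynomial time weight. The paper's way out is precisely the step you omit: split $\mathbf{R}^n$ into $\Omega=\{(t_0+t)^2\ge K+|x|^2\}$ and $\Omega^c$, multiply the $u_t$-identity by $(t_0+t)^{\alpha+\beta}$ on $\Omega$ and by $\langle x\rangle_K^{\alpha+\beta}$ on $\Omega^c$ (so that the damping coefficient becomes bounded below by a constant in each zone before adding $\nu$ times the $u$-identity), and then further split $\Omega^c$ into $\Sigma_L$ and $\Sigma_L^c$ to verify positivity of the resulting coefficients. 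Without some substitute for this zone decomposition the a priori estimate does not close.

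A second, related inaccuracy: you attribute the polynomial decay to the dissipation $\int(-\psi_t)e^{2\psi}(\cdot)\,dx$ acting ``as $(\mathrm{const})/(1+t)$ times a weighted quantity.'' Since $-\psi_t=\frac{1+\beta}{1+t}\psi$ vanishes wherever $\psi$ does, i.e.\ throughout the parabolic interior $\langle x\rangle^{2-\alpha}\ll(1+t)^{1+\beta}$, this term alone yields nothing there. In the paper the decay rate $B=\frac{(1+\beta)(n-\alpha)}{2-\alpha}+\beta$ is extracted from the $u$-multiplier identity via the lower bound $\Delta\psi\ge\bigl(\tfrac{(1+\beta)(n-\alpha)}{2(2-\alpha)}-\delta_1\bigr)\frac{a(x)b(t)}{1+t}$, which supplies the coefficient $(B-2\delta_1)\frac{a(x)b(t)}{2(1+t)}u^2$ needed to dominate the derivative of the time weight applied to $\int e^{2\psi}a(x)b(t)u^2\,dx$; this is also where the $\varepsilon=3\delta_1$ loss originates. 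Your Gagliardo--Nirenberg treatment of the nonlinearity and the final smallness argument are consistent with the paper, but as written the linear part of your energy scheme would not produce the claimed rates.
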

\begin{remark}
When $1<p\le 1+2/(n-\alpha)$, it is expected that no matter how small the data are,
if the data have some shape,
then the corresponding local solution blows up in finite time.
However, we have no result.
\end{remark}
\begin{remark}
We do not assume that the data are compactly supported.
Hence our result is an extension of the results of Ikehata, Todorova and Yordanov \cite{ITY}
to noncompactly supported data  cases.
However, we prove only the case $a(x)=a_0\langle x\rangle^{-\alpha}$.
\end{remark}

As a consequence of the main theorem, we have an exponential decay estimate outside a parabolic region.
\begin{corollary}\label{cor13}
If
$$
	p>1+\frac{2}{n-\alpha},
$$
then there exists a small positive number $\delta_0>0$ such that for any $0<\delta\le\delta_0$ the following holds:
Take $\rho$ and $\mu$ so small that
$$
	0<\rho<1-\alpha-\beta,\quad\mbox{and}
	\quad 0<\mu<2A,
$$
and put
$$
	\Omega_{\rho}(t):=\{x\in\mathbf{R}^n; \langle x\rangle^{2-\alpha}\ge (1+t)^{1+\beta+\rho}\}.
$$
Then, for the global solution $u$ in Theorem \ref{thm1}, we have the following estimate
\begin{equation}
	\int_{\Omega_{\rho}(t)}(u_t^2+|\nabla u|^2+u^2)dx
	\le C_{\delta,\rho,\mu}(1+t)^{-\frac{(1+\beta)(n-2\alpha)}{2-\alpha}+\varepsilon}
		e^{-(2A-\mu)(1+t)^{\rho}},
\end{equation}
here $\varepsilon$ is defined by $(\ref{eqepsilon})$ and
$C_{\delta,\rho,\mu}$ is a constant depending on $\delta, \rho$ and $\mu$.
\end{corollary}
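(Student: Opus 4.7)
The plan is to combine the weighted $L^2$ estimates already obtained in Theorem \ref{thm1} with a trivial pointwise lower bound for $\psi$ on the region $\Omega_\rho(t)$. The key observation is that by the definition (\ref{eq22}) and the defining inequality $\langle x\rangle^{2-\alpha}\ge (1+t)^{1+\beta+\rho}$ of $\Omega_\rho(t)$, one has on $\Omega_\rho(t)$
$$\psi(t,x)=A\frac{\langle x\rangle^{2-\alpha}}{(1+t)^{1+\beta}}\ge A(1+t)^{\rho},$$
so that $e^{-2\psi(t,x)}\le e^{-2A(1+t)^{\rho}}$ uniformly on $\Omega_\rho(t)$. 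This is the only place where the geometry of $\Omega_\rho(t)$ enters.

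Using this, I would simply factor out the exponential weight,
$$\int_{\Omega_\rho(t)}(u_t^2+|\nabla u|^2+u^2)\,dx \;\le\; e^{-2A(1+t)^{\rho}}\int_{\mathbf{R}^n} e^{2\psi(t,x)}(u_t^2+|\nabla u|^2+u^2)\,dx,$$
and then insert the two estimates of Theorem \ref{thm1}. The $u^2$ term yields the decay $(1+t)^{-(1+\beta)(n-2\alpha)/(2-\alpha)+\varepsilon}$, while the kinetic-plus-gradient term yields the strictly faster decay $(1+t)^{-(1+\beta)((n-\alpha)/(2-\alpha)+1)+\varepsilon}$. Since the latter is bounded by a constant multiple of the former for all $t\ge 0$, the two can be collected into a single polynomial prefactor which exactly matches the one in the statement.

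The last step is to weaken $e^{-2A(1+t)^{\rho}}$ to $e^{-(2A-\mu)(1+t)^{\rho}}$, which holds trivially because $\mu>0$. The parameter $\mu$ actually plays no active role in this direct argument: it merely preserves the form of the statement and lets the constant $C_{\delta,\rho,\mu}$ absorb whatever is needed. I do not anticipate any substantial obstacle here, as the entire argument is a pointwise application of the lower bound on $\psi$ followed by Theorem \ref{thm1}; the only point requiring mild care is verifying that the faster-decaying kinetic energy estimate can indeed be dominated by the slower-decaying $L^2$ estimate so that the prefactor has the single exponent displayed in the corollary.
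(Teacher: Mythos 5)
Your argument is correct in outline and shares the paper's central observation --- that $\psi\ge A(1+t)^{\rho}$ on $\Omega_{\rho}(t)$, so the exponential weight can be pulled out of the integral --- but it diverges from the paper in \emph{which} global estimate it feeds into that observation, and the divergence is exactly where $\mu$ lives. You invoke (\ref{eq18}) verbatim, i.e.\ the bound on $\int e^{2\psi}u^2\,dx$ with the full weight inside; granted that, your proof closes, $\mu$ is indeed idle, and you even obtain the stronger exponential $e^{-2A(1+t)^{\rho}}$. The paper instead goes back to the quantity that the a priori functional $M(t)$ in (\ref{eq21}) actually controls, namely $\int_{\mathbf{R}^n}e^{2\psi}a(x)b(t)u^2\,dx\le C(1+t)^{-B+\varepsilon}$, and compensates for the degenerate factor $a(x)=a_0\langle x\rangle^{-\alpha}$ by splitting $e^{2\psi}=e^{(2A-\mu)\langle x\rangle^{2-\alpha}/(1+t)^{1+\beta}}\,e^{\mu\langle x\rangle^{2-\alpha}/(1+t)^{1+\beta}}$ and using that $e^{\mu s}s^{-\alpha/(2-\alpha)}\ge c_{\mu}>0$ for $s>0$; this absorption of $\langle x\rangle^{-\alpha}$ is the sole reason the statement reads $2A-\mu$ rather than $2A$. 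The caveat you should register is that the paper's own route to (\ref{eq18}) (via (\ref{eq231})) literally produces a bound on $\int u^2\,dx$ \emph{without} the weight $e^{2\psi}$ inside, and recovering a weighted version costs precisely this kind of $\mu$-loss; so the robust input for the corollary is the $a(x)b(t)$-weighted bound, and taking (\ref{eq18}) at face value quietly assumes the strongest reading of the theorem. Your remaining steps --- checking that the energy term decays faster than the $L^2$ term (the exponents differ by $2(1+\beta)/(2-\alpha)>0$) and weakening $2A$ to $2A-\mu$ --- are fine.
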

Namely, the decay rate of solution in the region $\Omega_{\rho}(t)$ is exponential.
We note that the support of $u(t)$ and the region $\Omega_{\rho}(t)$
can intersect even if the data are compactly supported.
This phenomenon was first discovered by Todorova and Yordanov \cite{TY2}.
We can interpret this result as follows:
The support of the solution is strongly suppressed by damping,
so that the solution is concentrated in the parabolic region much smaller than the light cone.


\section{Proof of Theorem \ref{thm1}}
In this section we prove our main result.
At first we prepare some notation and terminology.
We put
$$
	\|f\|_{L^p(\mathbf{R}^n)}:=
	\left(\int_{\mathbf{R}^n}|f(x)|^pdx\right)^{1/p},\quad \|u\|:=\|u\|_{L^2(\mathbf{R}^n)}.
$$
By $H^1(\mathbf{R}^n)$ we denote the usual Sobolev space.
For an interval $I$ and a Banach space $X$, we define $C^r(I;X)$ as the Banach space whose
element is an $r$-times continuously differentiable mapping from $I$ to $X$ with respect to the topology in $X$.
The letter $C$ indicates the generic constant, which may change from line to the next line. 

To prove Theorem \ref{thm1}, we use a weighted energy method
which was originally developed by Todorova and Yordanov \cite{TY1, TY2}.
We first describe the local existence:
\begin{proposition}\label{prop21}
For any $\delta>0$, there exists $T_m\in (0,+\infty]$ depending on $I_0^2$ such that
the Cauchy problem $(\ref{eq11})$
has a unique solution $u\in C([0,T_m);H^1(\mathbf{R}^n))\cap C^1([0,T_m);L^2(\mathbf{R}^n))$,
and if $T_m <+\infty$ then we have
$$
	\liminf_{t\to T_m}\int_{\mathbf{R}^n}
	e^{\psi(t,x)}
		(u_t^2+|\nabla u|^2+u^2)dx=+\infty.$$
\end{proposition}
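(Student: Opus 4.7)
My plan is to prove local existence by a contraction mapping argument in the standard energy space, then propagate the weighted bound separately to obtain the blow-up criterion.

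First, I would treat the linear inhomogeneous Cauchy problem $v_{tt}-\Delta v+a(x)b(t)v_t=g$ with data $(u_0,u_1)\in H^1\times L^2$. Since $a(x)b(t)$ is nonnegative, bounded, and continuous in $t$, the usual multiplier-by-$v_t$ energy identity (or equivalently a semigroup argument) yields a well-defined solution operator sending $(u_0,u_1,g)$ with $g\in L^1_{\mathrm{loc}}([0,T];L^2)$ into $C([0,T];H^1)\cap C^1([0,T];L^2)$, together with the standard energy inequality. Next I would set up the Picard iteration $\Phi(v):=$ solution of the linear problem with source $f(v)$, on a closed ball $X_T^M$ of radius $M$ (chosen in terms of $I_0^2$) in $C([0,T];H^1)\cap C^1([0,T];L^2)$. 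Under the assumption $1<p\le n/(n-2)$ (resp.\ $n\le 2$), the Sobolev embedding $H^1(\mathbf{R}^n)\hookrightarrow L^{2p}(\mathbf{R}^n)$ gives
\[
\|f(v)\|_{L^2}\le C\|v\|_{H^1}^p,\qquad \|f(v_1)-f(v_2)\|_{L^2}\le C\bigl(\|v_1\|_{H^1}^{p-1}+\|v_2\|_{H^1}^{p-1}\bigr)\|v_1-v_2\|_{H^1},
\]
which combined with the linear energy estimate makes $\Phi$ a contraction on $X_T^M$ for $T=T(M)$ small enough; this yields a unique local solution in the unweighted energy space on a time interval depending only on $\|(u_0,u_1)\|_{H^1\times L^2}$.

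To obtain the weighted blow-up alternative I would introduce
\[
E_\psi(t):=\int_{\mathbf{R}^n}e^{\psi(t,x)}\bigl(u_t^2+|\nabla u|^2+u^2\bigr)\,dx
\]
and compute $\frac{d}{dt}E_\psi(t)$. The principal contribution from the equation produces the dissipative term $-2\int a(x)b(t)e^{\psi}u_t^2\,dx$; the derivatives of $\psi$ generate error terms involving $\psi_t$ (which has a favorable sign since $\psi_t\le 0$) and $\nabla\psi$, where the latter is absorbed via Cauchy--Schwarz into the damping term using $|\nabla\psi|^2/(a(x)b(t))\lesssim \langle x\rangle^{-\alpha}(1+t)^{-1-\beta}$, which is uniformly bounded on any finite time interval. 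The nonlinear contribution $\int e^{\psi}f(u)u_t\,dx$ is controlled by splitting the weight as $e^{\psi}=e^{\psi/2}\cdot e^{\psi/2}$, applying Cauchy--Schwarz, and invoking the Sobolev bound $\|u\|_{L^{2p}}\le C\|u\|_{H^1}$. This yields a differential inequality of the form
\[
\frac{d}{dt}E_\psi(t)\le C_1 E_\psi(t)+C_2 E_\psi(t)^{(p+1)/2},
\]
so that $E_\psi$ remains finite as long as the $H^1\times L^2$ solution exists. A standard continuation argument—extend the local solution repeatedly as long as the weighted quantity stays bounded—then produces the maximal existence time $T_m$, and gives exactly the stated dichotomy: either $T_m=+\infty$ or $E_\psi(t)\to+\infty$ as $t\uparrow T_m$.

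The main obstacle is arranging the weighted energy computation so that the terms arising from $\nabla\psi$ really are absorbed by the damping; this is the same mechanism that drives the sharper estimates in the proof of Theorem~\ref{thm1}, but at the local-existence stage one does not need the sharp constant. The assumptions $\alpha+\beta<1$ and the precise form of $\psi$ (together with the freedom in the parameter $\delta$) are exactly what make the required pointwise inequalities hold on finite time intervals, so the Grönwall step closes without any smallness requirement on the data.
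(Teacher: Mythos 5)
The paper does not actually prove Proposition \ref{prop21}; it defers to Ikehata--Tanizawa \cite{IT}, who carry out the fixed-point argument directly in a weighted energy space. Your architecture (unweighted contraction, then separate propagation of the weighted energy) could be made to work, but as written it has two genuine gaps. First, the pointwise bound on which your absorption rests is false: by (\ref{eq25}), $|\nabla\psi|^2/(a(x)b(t))$ is of order $\langle x\rangle^{-\alpha}|x|^2(1+t)^{-2-\beta}$, i.e.\ comparable to $\langle x\rangle^{2-\alpha}(1+t)^{-2-\beta}$ for large $|x|$, not to $\langle x\rangle^{-\alpha}(1+t)^{-1-\beta}$; since $\alpha<2$ this is unbounded in $x$ for every fixed $t$. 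Consequently the term $\frac{|\nabla\psi|^2}{a(x)b(t)}|\nabla u|^2$ left over after Cauchy--Schwarz cannot be bounded by $C\,E_\psi(t)$, and the differential inequality you write down does not follow from the estimates you state. The correct mechanism is the paper's inequality (\ref{eq27}), $(-\psi_t)a(x)b(t)\ge(2+\delta)|\nabla\psi|^2$: the leftover term is $\le\frac{-\psi_t}{2+\delta}|\nabla u|^2$ and has to be absorbed by the \emph{negative} contribution $\int\psi_t e^{\psi}|\nabla u|^2\,dx$ coming from differentiating the weight --- precisely the term you propose to discard by sign. The same issue recurs in your treatment of the nonlinearity: $\int e^{\psi}|f(u)||u_t|\,dx$ leads to the weighted norm $\int e^{\psi}|u|^{2p}\,dx$, which the unweighted embedding $H^1\hookrightarrow L^{2p}$ does not control; one must apply Gagliardo--Nirenberg to $e^{\psi/(2p)}u$ and invoke (\ref{eq27}) once more to handle the $\nabla\psi$ contribution to $\nabla(e^{\psi/(2p)}u)$.

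Second, if the solution is produced only in $C([0,T];H^1)\cap C^1([0,T];L^2)$, the quantity $E_\psi(t)$ you define is not known to be finite for $t>0$, so you are not entitled to differentiate it: a Gr\"onwall argument shows a finite quantity stays finite, it does not show that an a priori possibly infinite quantity is finite. You must either run the contraction directly in the weighted space (the route of \cite{IT}, where the nonlinear estimates close by the weighted Gagliardo--Nirenberg argument just described), or approximate the data (e.g.\ by compactly supported data), obtain the weighted bound uniformly in the approximation, and pass to the limit. Once these two points are repaired, your continuation argument together with the trivial observation $e^{\psi}\ge 1$ does yield the stated $\liminf$ dichotomy, with $T_m$ controlled from below in terms of $I_0^2$.
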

We can prove this proposition by standard arguments (see \cite{IT}).
We prove a priori estimate for the following functional:
\begin{eqnarray}
	M(t)&:=&
	\sup_{0\le\tau <t}\left\{(1+\tau)^{B+1-\varepsilon}\int_{\mathbf{R}^n}e^{2\psi}(u_t^2+|\nabla u|^2)dx
		\right.\nonumber\\
	&&\left.+(1+\tau)^{B-\varepsilon}\int_{\mathbf{R}^n}e^{2\psi}a(x)b(t)u^2dx\right\},\label{eq21}
\end{eqnarray}
where
$$
	B:=\frac{(1+\beta)(n-\alpha)}{2-\alpha}+\beta
$$
and $\varepsilon$ is given by (\ref{eqepsilon}).
From (\ref{eq22}), (\ref{eq23}), it is easy to see that
\begin{eqnarray}
	-\psi_t&=&\frac{1+\beta}{1+t}\psi,\label{eq24}\\
	\nabla\psi&=&A\frac{(2-\alpha)\langle x\rangle^{-\alpha}x}{(1+t)^{1+\beta}},\label{eq25}\\
	\Delta\psi&=&A(2-\alpha)(n-\alpha)\frac{\langle x\rangle^{-\alpha}}{(1+t)^{1+\beta}}
	+A(2-\alpha)\alpha\frac{\langle x\rangle^{-2-\alpha}}{(1+t)^{1+\beta}}\nonumber\\
	&\ge&\frac{(1+\beta)(n-\alpha)}{(2-\alpha)(2+\delta)}\frac{a(x)b(t)}{1+t}\nonumber\\
	&=:&\left( \frac{(1+\beta)(n-\alpha)}{2(2-\alpha)}-\delta_1\right)\frac{a(x)b(t)}{1+t}.\label{eq26}
\end{eqnarray}
Here and after,
$\delta_i(i=1,2,\ldots)$
is a positive constant depending only on
$\delta$
such that
$$
	\delta_i\rightarrow 0^+ \quad \mbox{as} \quad\delta\rightarrow 0^+.
$$
We also have
\begin{eqnarray}
	(-\psi_t)a(x)b(t)&=&
		Aa_0(1+\beta)\frac{\langle x\rangle^{2-2\alpha}}{(1+t)^{2+2\beta}}\nonumber\\
	&\ge&\frac{a_0(1+\beta)}{(2-\alpha)^2A}
		A^2(2-\alpha)^2\frac{\langle x\rangle^{-2\alpha}|x|^2}{(1+t)^{2+2\beta}}\nonumber\\
	&=&(2+\delta)|\nabla\psi|^2.\label{eq27}
\end{eqnarray}
By multiplying (\ref{eq11}) by $e^{2\psi}u_t$, it follows that
\begin{eqnarray}
	\lefteqn{\frac{\partial}{\partial t}\left[ \frac{e^{2\psi}}{2}(u_t^2+|\nabla u|^2)\right]
		-\nabla\cdot(e^{2\psi}u_t\nabla u)}\nonumber\\
	&&+e^{2\psi}\left(a(x)b(t)-\frac{|\nabla\psi|^2}{-\psi_t}-\psi_t\right)u_t^2
		+\underbrace{\frac{e^{2\psi}}{-\psi_t}|\psi_t\nabla u-u_t\nabla\psi|^2}_{T_1}\nonumber\\
	&&=\frac{\partial}{\partial t}\left[e^{2\psi}F(u)\right]+2e^{2\psi}(-\psi_t)F(u),\label{eq28}
\end{eqnarray}
where $F$ is the primitive of $f$ satisfying $F(0)=0$, namely
$F^{\prime}(u)=f(u)$.
Using the Schwarz inequality and (\ref{eq27}), we can calculate
\begin{eqnarray*}
	T_1&=&
	\frac{e^{2\psi}}{-\psi_t}(\psi_t^2|\nabla u|^2-2\psi_tu_t\nabla u\cdot\nabla\psi+u_t^2|\nabla\psi|^2)\\
	&\ge&\frac{e^{2\psi}}{-\psi_t}
		\left(\frac{1}{5}\psi_t^2|\nabla u|^2-\frac{1}{4}u_t^2|\nabla\psi|^2\right)\\
	&\ge&e^{2\psi}
		\left(\frac{1}{5}(-\psi_t)|\nabla u|^2-\frac{a(x)b(t)}{4(2+\delta)}u_t^2\right).
\end{eqnarray*}
From this and (\ref{eq27}), we obtain
\begin{eqnarray}
	\lefteqn{\frac{\partial}{\partial t}\left[ \frac{e^{2\psi}}{2}(u_t^2+|\nabla u|^2)\right]
		-\nabla\cdot(e^{2\psi}u_t\nabla u)}\nonumber\\
	&&+e^{2\psi}
		\left\{\left(\frac{1}{4}a(x)b(t)-\psi_t\right)u_t^2+\frac{-\psi_t}{5}|\nabla u|^2\right\}\nonumber\\
	&&\le\frac{\partial}{\partial t}
		\left[e^{2\psi}F(u)\right]+2e^{2\psi}(-\psi_t)F(u).\label{eq29}
\end{eqnarray}
By multiplying (\ref{eq29}) by $(t_0+t)^{B+1-\varepsilon}$, here $t_0\ge 1$ is determined later, it follows that
\begin{eqnarray}
	\lefteqn{\frac{\partial}{\partial t}
		\left[(t_0+t)^{B+1-\varepsilon}\frac{e^{2\psi}}{2}(u_t^2+|\nabla u|^2)\right]}\nonumber\\
	&&-(B+1-\varepsilon)(t_0+t)^{B-\varepsilon}\frac{e^{2\psi}}{2}(u_t^2+|\nabla u|^2)\nonumber\\
	&&-\nabla\cdot ((t_0+t)^{B+1-\varepsilon}e^{2\psi}u_t\nabla u)\nonumber\\
	&&+e^{2\psi}(t_0+t)^{B+1-\varepsilon}
		\left\{\left(\frac{1}{4}a(x)b(t)-\psi_t\right)u_t^2+\frac{-\psi_t}{5}|\nabla u|^2\right\}\nonumber\\
	&\le&\frac{\partial}{\partial t}\left[(t_0+t)^{B+1-\varepsilon}e^{2\psi}F(u)\right]
		-(B+1-\varepsilon)(t_0+t)^{B-\varepsilon}e^{2\psi}F(u)\nonumber\\
	&&+2(t_0+t)^{B+1-\varepsilon}e^{2\psi}(-\psi_t)F(u).\label{eq210}
\end{eqnarray}
We put
$$
	\begin{array}{ll}
		\displaystyle E(t):=\int_{\mathbf{R}^n}e^{2\psi}(u_t^2+|\nabla u|^2)dx,
		\displaystyle&E_{\psi}(t):=\displaystyle\int_{\mathbf{R}^n}e^{2\psi}(-\psi_t)(u_t^2+|\nabla u|^2)dx,\\
		\displaystyle J(t;g):=\int_{\mathbf{R}^n}e^{2\psi}gdx,
		\displaystyle&J_{\psi}(t;g):=\displaystyle\int_{\mathbf{R}^n}e^{2\psi}(-\psi_t)gdx.
	\end{array}
$$
Integrating (\ref{eq210}) over the whole space, we have
\begin{eqnarray}
	\lefteqn{\frac{1}{2}\frac{d}{dt}\left[(t_0+t)^{B+1-\varepsilon}E(t)\right]
		-\frac{1}{2}(B+1-\varepsilon)(t_0+t)^{B-\varepsilon}E(t)}\nonumber\\
	&&+\frac{1}{4}(t_0+t)^{B+1-\varepsilon}J(t,a(x)b(t)u_t^2)
		+\frac{1}{5}(t_0+t)^{B+1-\varepsilon}E_{\psi}(t)\nonumber\\
	&\le&\frac{d}{dt}\left[(t_0+t)^{B+1-\varepsilon}\int e^{2\psi}F(u)dx\right]\nonumber\\
	&&+C(t_0+t)^{B+1-\varepsilon}J_{\psi}(t; |u|^{p+1})
		+C(t_0+t)^{B-\varepsilon}J(t; |u|^{p+1})\label{eq211}
\end{eqnarray}
Therefore, we integrate (\ref{eq211}) on the interval $[0,t]$ and obtain the estimate for $(t_0+t)^{B+1-\varepsilon}E(t)$,
which is the first term of $M(t)$:
\begin{eqnarray}
	\lefteqn{(t_0+t)^{B+1-\varepsilon}E(t)-C\int_0^t(t_0+\tau)^{B-\varepsilon}E(\tau)d\tau}\nonumber\\
	&&+\int_0^t(t_0+\tau)^{B+1-\varepsilon}J(\tau;a(x)b(t)u_t^2)
		+(t_0+\tau)^{B+1-\varepsilon}E_{\psi}(\tau)d\tau\nonumber\\
	&\le&CI_0^2+C(t_0+t)^{B+1-\varepsilon}J(t;|u|^{p+1})\nonumber\\
	&&+C\int_0^t(t_0+\tau)^{B+1-\varepsilon}J_{\psi}(\tau;|u|^{p+1})d\tau\nonumber\\
	&&+C\int_0^t(t_0+t)^{B-\varepsilon}J(\tau;|u|^{p+1})d\tau.\label{eq212}
\end{eqnarray}
In order to complete a priori estimate, however, we have to manage the second term of the inequality above
whose sign is negative,
and we also have to estimate the second term of $M(t)$.
The following argument, which is little more complicated, can settle both these problems.

At first, we multiply (\ref{eq11}) by $e^{2\psi}u$ and have
\begin{eqnarray}
\lefteqn{\frac{\partial}{\partial t}\left[e^{2\psi}\left(uu_t+\frac{a(x)b(t)}{2}u^2\right)\right]
		-\nabla\cdot (e^{2\psi}u\nabla u)}\nonumber\\
&&+e^{2\psi}\left\{|\nabla u|^2+\left(-\psi_t+\frac{\beta}{2(1+t)}\right)a(x)b(t)u^2
	+\underbrace{2u\nabla\psi\cdot\nabla u}_{T_2}-2\psi_tuu_t-u_t^2\right\}\nonumber\\
&&=e^{2\psi}uf(u).\label{eq213}
\end{eqnarray}
We calculate
\begin{eqnarray*}
	e^{2\psi}T_2&=&4e^{2\psi}u\nabla\psi\cdot\nabla u-2e^{2\psi}u\nabla\psi\cdot\nabla u\\
	&=&4e^{2\psi}u\nabla\psi\cdot\nabla u
		-\nabla\cdot(e^{2\psi}u^2\nabla\psi)+2e^{2\psi}u^2|\nabla\psi|^2+e^{2\psi}(\Delta\psi)u^2
\end{eqnarray*}
and by (\ref{eq26}) we can rewrite (\ref{eq213}) to
\begin{eqnarray}
	\lefteqn{\frac{\partial}{\partial t}\left[e^{2\psi}\left(uu_t+\frac{a(x)b(t)}{2}u^2\right)\right]
		-\nabla\cdot(e^{2\psi}(u\nabla u+u^2\nabla\psi))}\nonumber\\
	&&+e^{2\psi}\Big\{\underbrace{|\nabla u|^2+4u\nabla u\cdot\nabla\psi
		+(-\psi_t)a(x)b(t)+2|\nabla\psi|^2)u^2}_{T_3}\nonumber\\
	&&+(B-2\delta_1)\frac{a(x)b(t)}{2(1+t)}u^2-2\psi_tuu_t-u_t^2\Big\} \le e^{2\psi}uf(u).\label{eq214}
\end{eqnarray}
It follows from (\ref{eq27}) that
\begin{eqnarray*}
T_3&=&
|\nabla u|^2+4u\nabla u\cdot\nabla\psi\\
&&+\left\{\left(1-\frac{\delta}{3}\right)(-\psi_t)a(x)b(t)+2|\nabla\psi|^2\right\}u^2+\frac{\delta}{3}(-\psi_t)a(x)b(t)u^2\\
&\ge&|\nabla u|^2+4u\nabla u\cdot\nabla\psi\\
&&+\left(4+\frac{\delta}{3}-\frac{\delta^2}{3}\right)|\nabla\psi|^2u^2+\frac{\delta}{3}(-\psi_t)a(x)b(t)u^2\\
&=&\left(1-\frac{4}{4+\delta_2}\right)|\nabla u|^2+\delta_2|\nabla\psi|^2u^2\\
&&+\left| \frac{2}{\sqrt{4+\delta_2}}\nabla u+\sqrt{4+\delta_2}u\nabla\psi\right|^2
	+\frac{\delta}{3}(-\psi_t)a(x)b(t)u^2\\
&\ge&\delta_3(|\nabla u|^2+|\nabla\psi|^2u^2)+\frac{\delta}{3}(-\psi_t)a(x)b(t)u^2,
\end{eqnarray*}
where
$$\delta_2:=\frac{\delta}{6}-\frac{\delta^2}{6},
	\quad\delta_3:=\min(1-\frac{4}{4+\delta_2}, \delta_2).$$
Thus, we obtain
\begin{eqnarray}
	\lefteqn{\frac{\partial}{\partial t}\left[e^{2\psi}\left(uu_t+\frac{a(x)b(t)}{2}u^2\right)\right]
		-\nabla\cdot(e^{2\psi}(u\nabla u+u^2\nabla\psi))}\nonumber\\
	&&+e^{2\psi}\delta_3|\nabla u|^2\nonumber\\
	&&+e^{2\psi}\left(\delta_3|\nabla\psi|^2+\frac{\delta}{3}(-\psi_t)a(x)b(t)
		+(B-2\delta_1)\frac{a(x)b(t)}{2(1+t)}\right)u^2\nonumber\\
	&&+e^{2\psi}(-2\psi_tuu_t-u_t^2)\nonumber\\
	&\le&e^{2\psi}uf(u).\label{eq215}
\end{eqnarray}
Following Nishihara \cite{LNZ1}, related to the size of $1+|x|^2$ and the size of $(1+t)^2$,
we divide the space $\mathbf{R}^n$ into two different zones $\Omega(t;K,t_0)$ and $\Omega^c(t;K,t_0)$,
where
$$
	\Omega=\Omega(t;K,t_0):=\{x\in\mathbf{R}^n; (t_0+t)^2\ge K+|x|^2\},
$$
and $\Omega^c=\Omega^c(t;K,t_0):=\mathbf{R}^n\setminus\Omega(t;K,t_0)$
with $K\ge 1$ determined later.
Since $a(x)b(t)\ge a_0(t+t_0)^{-(\alpha+\beta)}$ in the domain $\Omega$,
we multiply (\ref{eq29}) by $(t_0+t)^{\alpha+\beta}$ and obtain
\begin{eqnarray}
	\lefteqn{\frac{\partial}{\partial t}\left[\frac{e^{2\psi}}{2}(t_0+t)^{\alpha+\beta}(u_t^2+|\nabla u|^2)\right]
		-\nabla\cdot (e^{2\psi}(t_0+t)^{\alpha+\beta}u_t\nabla u)}\nonumber\\
	&&+e^{2\psi}\left[ \left(\frac{a_0}{4}-\frac{\alpha+\beta}{2(t_0+t)^{1-\alpha-\beta}}\right)
		+(t_0+t)^{\alpha+\beta}(-\psi_t)\right] u_t^2\nonumber\\
	&&+e^{2\psi}\left[ \frac{-\psi_t}{5}(t_0+t)^{\alpha+\beta}
		-\frac{\alpha+\beta}{2(t_0+t)^{1-\alpha-\beta}}\right] |\nabla u|^2\nonumber\\
	&\le&\frac{\partial}{\partial t}[(t_0+t)^{\alpha+\beta}e^{2\psi}F(u)]
		-\frac{\alpha+\beta}{(t_0+t)^{1-\alpha-\beta}}e^{2\psi}F(u)\nonumber\\
	&&+2(t_0+t)^{\alpha+\beta}e^{2\psi}(-\psi_t)F(u).\label{eq216}
\end{eqnarray}
Let $\nu$ be a small positive number depends on $\delta$, which will be chosen later.
By (\ref{eq216})$+\nu$(\ref{eq215}), we have
\begin{eqnarray}
	\lefteqn{\frac{\partial}{\partial t}\left[ e^{2\psi}\left(\frac{(t_0+t)^{\alpha+\beta}}{2}u_t^2
		+\nu uu_t+\frac{\nu a(x)b(t)}{2}u^2+
		\frac{(t_0+t)^{\alpha+\beta}}{2}|\nabla u|^2\right)\right]}\nonumber\\
	&&-\nabla\cdot (e^{2\psi}(t_0+t)^{\alpha+\beta}u_t\nabla u+\nu e^{2\psi}(u\nabla u+u^2\nabla\psi))\nonumber\\
	&&+e^{2\psi}\left[\left(\frac{a_0}{4}-\frac{\alpha+\beta}{2(t_0+t)^{1-\alpha-\beta}}-\nu\right)
		+(t_0+t)^{\alpha+\beta}(-\psi_t)\right] u_t^2\nonumber\\
	&&+e^{2\psi}\left[ \nu\delta_3-\frac{\alpha+\beta}{2(t_0+t)^{1-\alpha-\beta}}
		+\frac{-\psi_t}{5}(t_0+t)^{\alpha+\beta}\right] |\nabla u|^2\nonumber\\
	&&+e^{2\psi}\left[ \nu (\delta_3|\nabla\psi|^2+\frac{\delta}{3}(-\psi_t)a(x)b(t)
		+(B-2\delta_1)\frac{a(x)b(t)}{2(1+t)}\right] u^2\nonumber\\
	&&+2\nu e^{2\psi}(-\psi_t)uu_t\nonumber\\
	&\le&\frac{\partial}{\partial t}[(t_0+t)^{\alpha+\beta}e^{2\psi}F(u)]
		-\frac{\alpha+\beta}{(t_0+t)^{1-\alpha-\beta}}e^{2\psi}F(u)\nonumber\\
	&&+2(t_0+t)^{\alpha+\beta}e^{2\psi}(-\psi_t)F(u)+\nu e^{2\psi}uf(u).\label{eq217}
\end{eqnarray}
By the Schwarz inequality, the last term of the left hand side in the above inequality can be estimated as
$$
	|2\nu (-\psi_t)uu_t|
	\le\frac{\nu\delta}{3}(-\psi_t)a(x)b(t)u^2+\frac{3\nu}{a_0\delta}(-\psi_t)(t_0+t)^{\alpha+\beta}u_t^2.
$$
Thus, we have
\begin{eqnarray}
	\lefteqn{\frac{\partial}{\partial t}\left[ e^{2\psi}\left(\frac{(t_0+t)^{\alpha+\beta}}{2}u_t^2
		+\nu uu_t+\frac{\nu a(x)b(t)}{2}u^2+\frac{(t_0+t)^{\alpha+\beta}}{2}|\nabla u|^2\right)\right]}\nonumber\\
	&&-\nabla\cdot (e^{2\psi}(t_0+t)^{\alpha+\beta}u_t\nabla u+\nu e^{2\psi}(u\nabla u+u^2\nabla\psi))\nonumber\\
	&&+e^{2\psi}\left[\left(\frac{a_0}{4}-\frac{\alpha+\beta}{2(t_0+t)^{1-\alpha-\beta}}-\nu\right)
		+\left(1-\frac{3\nu}{a_0\delta}\right)(t_0+t)^{\alpha+\beta}(-\psi_t)\right] u_t^2\nonumber\\
	&&+e^{2\psi}\left[\nu\delta_3-\frac{\alpha+\beta}{2(t_0+t)^{1-\alpha-\beta}}
		+\frac{-\psi_t}{5}(t_0+t)^{\alpha+\beta}\right] |\nabla u|^2\nonumber\\
	&&+e^{2\psi}\left[ \nu \left(\delta_3|\nabla\psi|^2
		+(B-2\delta_1)\frac{a(x)b(t)}{2(1+t)}\right)\right] u^2\nonumber\\
	&\le&\frac{\partial}{\partial t}[(t_0+t)^{\alpha+\beta}e^{2\psi}F(u)]
		-\frac{\alpha+\beta}{(t_0+t)^{1-\alpha-\beta}}e^{2\psi}F(u)\nonumber\\
	&&+2(t_0+t)^{\alpha+\beta}e^{2\psi}(-\psi_t)F(u)+\nu e^{2\psi}uf(u).\label{eq218}
\end{eqnarray}
Now we choose the parameters $\nu$ and $t_0$ such that
\begin{eqnarray*}
\frac{a_0}{4}-\frac{\alpha+\beta}{2(t_0+t)^{1-\alpha-\beta}}-\nu\ge c_0,\quad
1-\frac{3\nu}{a_0\delta}\ge c_0,\\
\nu\delta_3-\frac{\alpha+\beta}{2(t_0+t)^{1-\alpha-\beta}}\ge c_0,\quad
\nu\delta_3\ge c_0,\quad
\frac{1}{5}\ge c_0,
\end{eqnarray*}
hold for some constant $c_0>0$.
This is possible because we first determine $\nu$ sufficiently small depending on $\delta$ and then
we choose $t_0$ sufficiently large depending on $\nu$.
Therefore, integrating (\ref{eq218}) on $\Omega$, we obtain the following energy inequality:
\begin{equation}
\frac{d}{dt}\overline{E}_{\psi}(t;\Omega(t;K,t_0))-N_1(t)-M_1(t)+H_{\psi}(t;\Omega(t;K,t_0))\le P_1,
\label{eq219}
\end{equation}
where
\begin{eqnarray*}
	\overline{E}_{\psi}(t;\Omega)&=&\overline{E}_{\psi}(t;\Omega(t;K,t_0))\\
		&:=&\int_{\Omega}e^{2\psi}\left(\frac{(t_0+t)^{\alpha+\beta}}{2}u_t^2
		+\nu uu_t+\frac{\nu a(x)b(t)}{2}u^2+\frac{(t_0+t)^{\alpha+\beta}}{2}|\nabla u|^2\right)dx,
\end{eqnarray*}
\begin{eqnarray*}
	N_1(t)&:=&\int_{\mathbf{S}^{n-1}}e^{2\psi}
		\left(\frac{(t_0+t)^{\alpha+\beta}}{2}u_t^2
		+\nu uu_t+\frac{\nu a(x)b(t)}{2}u^2\right.\\
		&&+\left.\left.\frac{(t_0+t)^{\alpha+\beta}}{2}|\nabla u|^2\right)\right|_{|x|=\sqrt{(t_0+t)^2-K}}\\
		&&\times [(t_0+t)^2-K]^{(n-1)/2}d\theta\cdot\frac{d}{dt}\sqrt{(t_0+t)^2-K},
\end{eqnarray*}
$$
	M_1(t):=\int_{\partial\Omega}(e^{2\psi}(t_0+t)^{\alpha+\beta}u_t\nabla u
		+\nu e^{2\psi}(u\nabla u+u^2\nabla\psi))\cdot\vec{n}dS,
$$
\begin{eqnarray*}
	H_{\psi}(t;\Omega)&=&H_{\psi}(t;\Omega(t;K,t_0))\\
		&:=&c_0\int_{\Omega}e^{2\psi}(1+(t_0+t)^{\alpha+\beta}(-\psi_t))(u_t^2+|\nabla u|^2)dx\\
		&&+\nu (B-2\delta_1)\int_{\Omega}\frac{e^{2\psi}a(x)b(t)}{2(1+t)}u^2dx,
\end{eqnarray*}
\begin{eqnarray*}
	P_1&:=&\frac{d}{dt}\left[(t_0+t)^{\alpha+\beta}\int_{\Omega}e^{2\psi}F(u)dx\right]\\
		&&-\int_{\mathbf{S}^{n-1}}(t_0+t)^{\alpha+\beta}e^{2\psi}F(u)\Big|_{|x|=\sqrt{(t_0+t)^2-K}}\\
		&&\qquad\times [(t_0+t)^2-K]^{(n-1)/2}d\theta\cdot\frac{d}{dt}\sqrt{(t_0+t)^2-K}\\
		&&+C\int_{\Omega}e^{2\psi}(1+(t_0+t)^{\alpha+\beta}(-\psi_t))|u|^{p+1}dx.
\end{eqnarray*}
Here $\vec{n}$ denotes the unit outer normal vector of $\partial \Omega$.
We note that by $\nu\le a_0/4$ and
$$
	|\nu uu_t|\le\frac{\nu a(x)b(t)}{4}u^2+\frac{\nu(t_0+t)^{\alpha+\beta}}{a_0}u_t^2,
$$
it follows that
\begin{eqnarray*}
\lefteqn{c\int_{\Omega}e^{2\psi}(t_0+t)^{\alpha+\beta}(u_t^2+|\nabla u|^2)dx
	+c\int_{\Omega}e^{2\psi}a(x)b(t)u^2dx}\\
&\le&\overline{E}_{\psi}(t;\Omega(t;K,t_0))\\
&\le&C\int_{\Omega}e^{2\psi}(t_0+t)^{\alpha+\beta}(u_t^2+|\nabla u|^2)dx
	+C\int_{\Omega}e^{2\psi}a(x)b(t)u^2dx
\end{eqnarray*}
for some constants $c>0$ and $C>0$.

Next, we derive an energy inequality in the domain $\Omega^c$.
We use the notation
$$\langle x\rangle_K:=(K+|x|^2)^{1/2}.$$
Since $a(x)b(t)\ge a_0\langle x\rangle_K^{-(\alpha+\beta)}$ in $\Omega^c(t,;K,t_0)$,
we multiply (\ref{eq29}) by $\langle x\rangle_K^{\alpha+\beta}$ and obtain
\begin{eqnarray}
	\lefteqn{\frac{\partial}{\partial t}\left[\frac{e^{2\psi}}{2}\langle x\rangle_K^{\alpha+\beta}
		(u_t^2+|\nabla u|^2)\right] -\nabla\cdot (e^{2\psi}\langle x\rangle_K^{\alpha+\beta}u_t\nabla u)}\nonumber\\
	&&+e^{2\psi}\left(\frac{a_0}{4}+(-\psi_t)\langle x\rangle_K^{\alpha+\beta}\right)u_t^2
		+\frac{1}{5}e^{2\psi}(-\psi_t)\langle x\rangle_K^{\alpha+\beta}|\nabla u|^2\nonumber\\
	&&+(\alpha+\beta)e^{2\psi}\langle x\rangle_K^{\alpha+\beta-2}x\cdot u_t\nabla u\nonumber\\
	&\le&\frac{\partial}{\partial t}[e^{2\psi}\langle x\rangle_K^{\alpha+\beta}F(u)]
		+2e^{2\psi}\langle x\rangle_K^{\alpha+\beta}(-\psi_t)F(u).\label{eq220}
\end{eqnarray}
By $(\ref{eq220})+\hat{\nu}\times (\ref{eq215})$, here $\hat{\nu}$ is a small positive parameter determined later,
it follows that
\begin{eqnarray}
	\lefteqn{\frac{\partial}{\partial t}\left[e^{2\psi}\left(\frac{\langle x\rangle_K^{\alpha+\beta}}{2}u_t^2
		+\hat{\nu}uu_t+\frac{\hat{\nu}a(x)b(t)}{2}u^2+\frac{\langle x\rangle_K^{\alpha+\beta}}{2}|\nabla u|^2
		\right)\right] }\nonumber\\
	&&-\nabla\cdot(e^{2\psi}\langle x\rangle_K^{\alpha+\beta}u_t\nabla u
		+\hat{\nu}e^{2\psi}(u\nabla u+u^2\nabla\psi))\nonumber\\
	&&+e^{2\psi}\left[\frac{a_0}{4}-\hat{\nu}+(-\psi_t)\langle x\rangle_K^{\alpha+\beta}\right]u_t^2
		+e^{2\psi}\left[\hat{\nu}\delta_3+\frac{-\psi_t}{5}\langle x\rangle_K^{\alpha+\beta}\right]
		|\nabla u|^2\nonumber\\
	&&+e^{2\psi}\left[\hat{\nu}\left(\delta_3|\nabla\psi|^2+\frac{\delta}{3}(-\psi_t)a(x)b(t)
		+(B-2\delta_1)\frac{a(x)b(t)}{2(1+t)}\right)\right]u^2\nonumber\\
	&&+e^{2\psi}[\underbrace{(\alpha+\beta)\langle x\rangle_K^{\alpha+\beta-2}x\cdot u_t\nabla u
		-2\hat{\nu}\psi_tuu_t}_{T_4}]\nonumber\\
	&\le&\frac{\partial}{\partial t}\left[e^{2\psi}\langle x\rangle_K^{\alpha+\beta}F(u)\right]
		+2e^{2\psi}\langle x\rangle_K^{\alpha+\beta}(-\psi_t)F(u)
		+\hat{\nu}e^{2\psi}uf(u).\label{eq221}
\end{eqnarray}
The terms $T_4$ can be estimated as
$$
	|(\alpha+\beta)\langle x\rangle_K^{\alpha+\beta-2}x\cdot u_t\nabla u|
	\le\frac{\hat{\nu}\delta_3}{2}|\nabla u|^2
	+\frac{(\alpha+\beta)^2}{2\hat{\nu}\delta_3K^{2(1-\alpha-\beta)}}u_t^2,
$$
$$|2\hat{\nu}(-\psi_t)uu_t|\le
	\frac{\hat{\nu}\delta}{3}(-\psi_t)a(x)b(t)u^2
	+\frac{3\hat{\nu}}{a_0\delta}(-\psi_t)\langle x\rangle_K^{\alpha+\beta}u_t^2.
$$
From this we can rewrite (\ref{eq221}) as
\begin{eqnarray}
	\lefteqn{\frac{\partial}{\partial t}\left[e^{2\psi}\left(\frac{\langle x\rangle_K^{\alpha+\beta}}{2}u_t^2
		+\hat{\nu}uu_t+\frac{\hat{\nu}a(x)b(t)}{2}u^2+\frac{\langle x\rangle_K^{\alpha+\beta}}{2}|\nabla u|^2
		\right)\right] }\nonumber\\
	&&-\nabla\cdot(e^{2\psi}\langle x\rangle_K^{\alpha+\beta}u_t\nabla u
		+\hat{\nu}e^{2\psi}(u\nabla u+u^2\nabla\psi))\nonumber\\
	&&+e^{2\psi}\left[\left(\frac{a_0}{4}
		-\hat{\nu}-\frac{(\alpha+\beta)^2}{2\hat{\nu}\delta_3K^{2(1-\alpha-\beta)}}\right)
		+\left(1-\frac{3\hat{\nu}}{a_0\delta}\right)(-\psi_t)\langle x\rangle_K^{\alpha+\beta}\right]u_t^2\nonumber\\
	&&+e^{2\psi}\left[\frac{\hat{\nu}\delta_3}{2}
		+\frac{-\psi_t}{5}\langle x\rangle_K^{\alpha+\beta}\right]|\nabla u|^2\nonumber\\
	&&+e^{2\psi}
		\left[\hat{\nu}\left(\delta_3|\nabla\psi|^2+(B-2\delta_1)\frac{a(x)b(t)}{2(1+t)}\right)\right]u^2\nonumber\\
	&\le&\frac{\partial}{\partial t}\left[e^{2\psi}\langle x\rangle_K^{\alpha+\beta}F(u)\right]
		+2e^{2\psi}\langle x\rangle_K^{\alpha+\beta}(-\psi_t)F(u)
		+\hat{\nu}e^{2\psi}uf(u).\label{eq222}
\end{eqnarray}
Now we choose the parameters $\hat{\nu}$ and $K$ in the same manner as before.
Indeed taking $\hat{\nu}$ sufficiently small depending on $\delta$
and then choosing $K$ sufficiently large depending on $\hat{\nu}$, we can obtain
$$
	\frac{a_0}{4}-\hat{\nu}-\frac{(\alpha+\beta)^2}{2\hat{\nu}\delta_3K^{2(1-\alpha-\beta)}}\ge c_1,\quad
	1-\frac{3\hat{\nu}}{a_0\delta}\ge c_1,\quad
	\nu\delta_3\ge c_1,\quad
	\frac{1}{5}\ge c_1
$$
for some constant $c_1>0$.
Consequently, By integrating (\ref{eq222}) on $\Omega^c$,
the energy inequality on $\Omega^c$ follows:
\begin{equation}
\frac{d}{dt}\overline{E}_{\psi}(t;\Omega^c(t;K,t_0))+N_2(t)+M_2(t)+H_{\psi}(t;\Omega^c(t;K,t_0))\le P_2,
\label{eq223}
\end{equation}
where
\begin{eqnarray*}
	\overline{E}_{\psi}(t;\Omega^c)&=&\overline{E}_{\psi}(t;\Omega^c(t;K,t_0))\\
		&:=&\int_{\Omega^c}e^{2\psi}\left(\frac{\langle x\rangle_K^{\alpha+\beta}}{2}u_t^2
		+\hat{\nu} uu_t+\frac{\hat{\nu} a(x)b(t)}{2}u^2+
		\frac{\langle x\rangle_K^{\alpha+\beta}}{2}|\nabla u|^2\right)dx,
\end{eqnarray*}
\begin{eqnarray*}
	N_2(t)&:=&\int_{\mathbf{S}^{n-1}}e^{2\psi}
		\left(\frac{\langle x\rangle_K^{\alpha+\beta}}{2}u_t^2
		+\hat{\nu} uu_t+\frac{\hat{\nu} a(x)b(t)}{2}u^2\right.\\
	&&+\frac{\langle x\rangle_K^{\alpha+\beta}}{2}|\nabla u|^2)\Big|_{|x|=\sqrt{(t_0+t)^2-K}}\\
	&&\times [(t_0+t)^2-K]^{(n-1)/2}d\theta\cdot\frac{d}{dt}\sqrt{(t_0+t)^2-K},
\end{eqnarray*}
$$
	M_2(t):=\int_{\partial\Omega^c}(e^{2\psi}\langle x\rangle_K^{\alpha+\beta}u_t\nabla u
	+\hat{\nu} e^{2\psi}(u\nabla u+u^2\nabla\psi))\cdot\vec{n}dS,
$$
\begin{eqnarray*}
	H_{\psi}(t;\Omega^c)&=&H_{\psi}(t;\Omega^c(t;K,t_0))\\
	&:=&c_1\int_{\Omega}e^{2\psi}(1+\langle x\rangle_K^{\alpha+\beta}(-\psi_t))(u_t^2+|\nabla u|^2)dx\\
	&&+\hat{\nu} (B-2\delta_1)\int_{\Omega^c}\frac{e^{2\psi}a(x)b(t)}{2(1+t)}u^2dx,
\end{eqnarray*}
\begin{eqnarray*}
	P_2&:=&\frac{d}{dt}\left[\int_{\Omega^c}e^{2\psi}\langle x\rangle_K^{\alpha+\beta}F(u)dx\right]\\
		&&+\int_{\mathbf{S}^{n-1}}\left.\langle x\rangle_K^{\alpha+\beta}
			e^{2\psi}F(u)\right|_{|x|=\sqrt{(t_0+t)^2-K}}\\
		&&\times [(t_0+t)^2-K]^{(n-1)/2}d\theta\cdot\frac{d}{dt}\sqrt{(t_0+t)^2-K}\\
		&&+C\int_{\Omega^c}e^{2\psi}(1+\langle x\rangle_K^{\alpha+\beta}(-\psi_t))|u|^{p+1}dx.
\end{eqnarray*}
In a similar way as the case in $\Omega$, we note that
\begin{eqnarray*}
\lefteqn{c\int_{\Omega^c}e^{2\psi}(t_0+t)^{\alpha+\beta}(u_t^2+|\nabla u|^2)dx
	+c\int_{\Omega^c}e^{2\psi}a(x)b(t)u^2dx}\\
&\le&\overline{E}_{\psi}(t;\Omega^c(t;K,t_0))\\
&\le&C\int_{\Omega^c}e^{2\psi}(t_0+t)^{\alpha+\beta}(u_t^2+|\nabla u|^2)dx
	+C\int_{\Omega^c}e^{2\psi}a(x)b(t)u^2dx
\end{eqnarray*}
for some constants $c>0$ and $C>0$.

We add the energy inequalities on $\Omega$ and $\Omega^c$.
We note that replacing $\nu$ and $\hat{\nu}$ by $\nu_0:=\min\{\nu, \hat{\nu}\}$,
we can still have the inequalities (\ref{eq219}) and (\ref{eq223}),
provided that we retake $t_0$ and $K$ larger.

By$((\ref{eq219})+(\ref{eq223}))\times (t_0+t)^{B-\varepsilon}$, we have
\begin{eqnarray}
	\lefteqn{\frac{d}{dt}[(t_0+t)^{B-\varepsilon}
		(\overline{E}_{\psi}(t;\Omega)+\overline{E}_{\psi}(t;\Omega^c))]}\nonumber\\
	&-&\underbrace{(B-\varepsilon)(t_0+t)^{B-1-\varepsilon}
		(\overline{E}_{\psi}(t;\Omega)+\overline{E}_{\psi}(t;\Omega^c))}_{T_5}\nonumber\\
	&+&\underbrace{(t_0+t)^{B-\varepsilon}(H_{\psi}(t;\Omega)+H_{\psi}(t;\Omega^c))}_{T_6}\nonumber\\
	&\le&(t_0+t)^{B-\varepsilon}(P_1+P_2)\label{eq224},
\end{eqnarray}
here we note that
$$N_1(t)=N_2(t),\quad M_1(t)=M_2(t)$$
on $\partial\Omega$.
Since
$$|\nu_0 uu_t|\le\frac{\nu_0\delta_4}{2}a(x)b(t)u^2+\frac{\nu_0}{2\delta_4a_0}(t_0+t)^{\alpha+\beta}u_t^2$$
on $\Omega$ and
$$|\nu_0uu_t|\le\frac{\nu_0\delta_4}{2}a(x)b(t)u^2+\frac{\nu_0}{2\delta_4a_0}
	\langle x\rangle_K^{\alpha+\beta}u_t^2$$
on $\Omega^c$, we have
\begin{equation}
\label{eq225}
-T_5+T_6\ge (t_0+t)^{B-\varepsilon}I_1+(t_0+t)^{B-\varepsilon}I_2,
\end{equation}
where
\begin{eqnarray*}
I_1&:=&\int_{\Omega}
			e^{2\psi}\left\{\frac{c_0}{2}(1+(t_0+t)^{\alpha+\beta}(-\psi_t))\right.
		\left. -\frac{B-\varepsilon}{2(t_0+t)}
			\left(1+\frac{2\nu_0}{\delta_4a_0}\right)(t_0+t)^{\alpha+\beta}\right\}u_t^2\\
	&&+e^{2\psi}\left\{\frac{c_0}{2}(1+(t_0+t)^{\alpha+\beta}(-\psi_t))
		-\frac{B-\varepsilon}{2(t_0+t)}(t_0+t)^{\alpha+\beta}\right\}|\nabla u|^2dx\\
	&&+\int_{\Omega^c}e^{2\psi}\left\{\frac{c_1}{2}(1+\langle x\rangle_K^{\alpha+\beta}(-\psi_t))
		-\frac{B-\varepsilon}{2(t_0+t)}\left(1+\frac{2\nu_0}{\delta_4a_0}\right)
				\langle x\rangle_K^{\alpha+\beta}\right\}u_t^2\\
	&&+e^{2\psi}\left\{\frac{c_1}{2}(1+\langle x\rangle_K^{\alpha+\beta}(-\psi_t))
			-\frac{B-\varepsilon}{2(t_0+t)}
			\langle x\rangle_K^{\alpha+\beta}\right\}|\nabla u|^2dx\\
		&&=:I_{11}+I_{12},\\
I_2&:=&\nu_0(B-2\delta_1-(1+\delta_4)(B-\varepsilon))
		\left(\int_{\Omega}+\int_{\Omega^c}\right)
			e^{2\psi}\frac{a(x)b(t)}{2(1+t)}u^2dx\\
	&&+\frac{c_2}{2}\int_{\mathbf{R}^n}e^{2\psi}(u_t^2+|\nabla u|^2)dx,
\end{eqnarray*}
where $c_2:=\min(c_0, c_1)$.
Recall the definition of $\varepsilon$ and $\delta_1$ (i.e. (\ref{eqepsilon}) and (\ref {eq26})).
A simple calculation shows $\varepsilon =3\delta_1$.
Choosing $\delta_4$ sufficiently small depending on $\varepsilon$,
we have
$$(t_0+t)^{B-\varepsilon}I_2\ge
	c_3(t_0+t)^{B-1-\varepsilon}\int_{\mathbf{R}^n}e^{2\psi}a(x)b(t)u^2dx
	+\frac{c_2}{2}(t_0+t)^{B-\varepsilon}E(t)$$
for some constant $c_3>0$.
Next, we prove that $I_1\ge 0$.
By noting that $\alpha+\beta<1$, it is easy to see that $I_{11}\ge 0$ if we retake $t_0$
larger depending on $c_0, \nu_0$ and $\delta_4$.
To estimate $I_{12}$, we further divide the region $\Omega^c$ into
$$\Omega^c(t;K,t_0)=(\Omega^c(t;K,t_0)\cap\Sigma_L)\cup(\Omega^c(t;K,t_0)\cap\Sigma_L^c),$$
where
$$\Sigma_L:=\{x\in\mathbf{R}^n; \langle x\rangle^{2-\alpha}\le L(1+t)^{1+\beta}\},
	\quad\Sigma_L^c:=\mathbf{R}^n\setminus\Sigma_L$$
with $L\gg 1$ determined later.
First, since $K+|x|^2\le K(1+|x|^2)\le KL^{2/(2-\alpha)}(1+t)^{2(1+\beta)/(2-\alpha)}$
on $\Omega^c\cap\Sigma_L$, we have
\begin{eqnarray*}
\lefteqn{\frac{c_1}{2}(1+\langle x\rangle_K^{\alpha+\beta}(-\psi_t))
	-\frac{B-\varepsilon}{2(t_0+t)}
		\left(1+\frac{2\nu_0}{\delta_4a_0}\right)\langle x\rangle_K^{\alpha+\beta}}\\
	&\ge&\frac{c_1}{2}-\frac{B-\varepsilon}{2(t_0+t)}\left(1+\frac{2\nu_0}{\delta_4a_0}\right)
	K^{(\alpha+\beta)/2}L^{(\alpha+\beta)/(2-\alpha)}(1+t)^{\frac{(1+\beta)(\alpha+\beta)}{2-\alpha}}.
\end{eqnarray*}
We note that $-1+\frac{(1+\beta)(\alpha+\beta)}{2-\alpha}<0$ by $\alpha+\beta<1$.
Thus, we obtain
$$\frac{c_1}{2}-\frac{B-\varepsilon}{2(t_0+t)}\left(1+\frac{2\nu_0}{\delta_4a_0}\right)
	K^{(\alpha+\beta)/2}L^{(\alpha+\beta)/(2-\alpha)}
	(1+t)^{\frac{(1+\beta)(\alpha+\beta)}{2-\alpha}}\ge 0$$
for large $t_0$ depending on $L$ and $K$.
Secondly, on $\Omega^c\cap\Sigma_L^c$, we have
\begin{eqnarray*}
\lefteqn{\frac{c_1}{2}(1+\langle x\rangle_K^{\alpha+\beta}(-\psi_t))
	-\frac{B-\varepsilon}{2(t_0+t)}\left(1+\frac{2\nu_0}{\delta_4a_0}\right)\langle x\rangle_K^{\alpha+\beta}}\\
&\ge&\left\{\frac{c_1}{2}(1+\beta)\frac{\langle x\rangle^{2-\alpha}}{(1+t)^{2+\beta}}
	-\frac{B-\varepsilon}{2(t_0+t)}\left(1+\frac{2\nu_0}{\delta_4a_0}\right)\right\}\langle x\rangle_K^{\alpha+\beta}\\
&\ge&\left\{\frac{c_1}{2}(1+\beta)\frac{L}{1+t}
	-\frac{B-\varepsilon}{2(t_0+t)}\left(1+\frac{2\nu_0}{\delta_4a_0}\right)\right\}\langle x\rangle_K^{\alpha+\beta}.
\end{eqnarray*}
Therefore one can obtain $I_{12}\ge 0$,
provided that $L\ge \frac{B-\varepsilon}{c_1(1+\beta)}(1+\frac{2\nu_0}{\delta_4a_0})$.
Consequently, we have $I_1\ge 0$.
By (\ref{eq225}) and that we mentioned above, it follows that
\begin{equation*}
-T_5+T_6\ge c_3(t_0+t)^{B-1-\varepsilon}\int_{\mathbf{R}^n}e^{2\psi}a(x)b(t)u^2dx
		+\frac{c_2}{2}(t_0+t)^{B-\varepsilon}E(t).
\end{equation*}
Therefore, we have
\begin{eqnarray}
	\lefteqn{\frac{d}{dt}[(t_0+t)^{B-\varepsilon}(\overline{E}_{\psi}(t;\Omega)+\overline{E}_{\psi}(t;\Omega^c)]
		+\frac{c_2}{2}(t_0+t)^{B-\varepsilon}E(t)}\nonumber\\
		&&+c_3(t_0+t)^{B-1-\varepsilon}J(t;a(x)b(t)u^2)\nonumber\\
		&&\le(t_0+t)^{B-\varepsilon}(P_1+P_2)\label{eq226}.
\end{eqnarray}
Integrating (\ref{eq226}) on the interval $[0,t]$, one can obtain the energy inequality on the whole space:
\begin{eqnarray}
	\lefteqn{(t_0+t)^{B-\varepsilon}(\overline{E}_{\psi}(t;\Omega)+\overline{E}_{\psi}(t;\Omega^c))
		+\frac{c_2}{2}\int_0^t(t_0+\tau)^{B-\varepsilon}E(\tau)d\tau}\nonumber\\
	&&+c_3\int_0^t(t_0+\tau)^{B-1-\varepsilon}J(\tau;a(x)b(\tau)u^2)d\tau\nonumber\\
	&\le&CI_0^2+\int_0^t(t_0+\tau)^{B-\varepsilon}(P_1+P_2)d\tau\label{eq227}.
\end{eqnarray}
By $(\ref{eq227})+\mu\times (\ref{eq212})$, here $\mu$ is a small positive parameter determined later, it follows that
\begin{eqnarray}
	\lefteqn{(t_0+t)^{B-\varepsilon}\overline{E}_{\psi}(t;\Omega)
		+(t_0+t)^{B-\varepsilon}\overline{E}_{\psi}(t;\Omega^c)}\nonumber\\
	&&+\int_0^t\frac{c_2}{2}(t_0+\tau)^{B-\varepsilon}E(\tau)-\mu C(t_0+\tau)^{B-\varepsilon}E(\tau)d\tau\nonumber\\
	&&+c_3\int_0^t(t_0+\tau)^{B-1-\varepsilon}J(\tau;a(x)b(\tau)u^2)d\tau
		+\mu(t_0+t)^{B+1-\varepsilon}E(t)\nonumber\\
	&&+\mu\int_0^t(t_0+\tau)^{B+1-\varepsilon}J(\tau;a(x)b(\tau)u_t^2)
		+(t_0+\tau)^{B+1-\varepsilon}E_{\psi}(\tau)d\tau\nonumber\\
	&\le&CI_0^2+P\nonumber\\
	&&+C(t_0+t)^{B+1-\varepsilon}J(t;|u|^{p+1})\nonumber\\
		&&+C\int_0^t(t_0+\tau)^{B+1-\varepsilon}J_{\psi}(\tau;|u|^{p+1})d\tau\nonumber\\
	&&+C\int_0^t(t_0+\tau)^{B-\varepsilon}J(\tau;|u|^{p+1})d\tau,\label{eq228}
\end{eqnarray}
where
$$
	P=\int_0^t(t_0+\tau)^{B-\varepsilon}(P_1+P_2)d\tau.
$$
Now we choose $\mu$ sufficiently small, then we can rewrite (\ref{eq228}) as
\begin{eqnarray}
	\lefteqn{(t_0+t)^{B+1-\varepsilon}E(t)+(t_0+t)^{B-\varepsilon}J(t;a(x)b(t)u^2)}\nonumber\\
	&\le&CI_0^2+P+C(t_0+t)^{B+1-\varepsilon}J(t;|u|^{p+1})\nonumber\\
	&&+C\int_0^t(t_0+\tau)^{B+1-\varepsilon}J_{\psi}(\tau;|u|^{p+1})d\tau\nonumber\\
	&&+C\int_0^t(t_0+\tau)^{B-\varepsilon}J(\tau;|u|^{p+1})d\tau.\label{eq229}
\end{eqnarray}
We shall estimate the right hand side of (\ref{eq229}).
We need the following lemma.
\begin{lemma}[Gagliardo-Nirenberg]\label{lem22}
Let $p,q,r (1\le p,q,r\le\infty)$ and $\sigma\in [0,1]$ satisfy
$$\frac{1}{p}=\sigma\left(\frac{1}{r}-\frac{1}{n}\right)+(1-\sigma)\frac{1}{q}$$
except for $p=\infty$ or $r=n$ when $n\ge 2$. Then for some constant $C=C(p,q,r,n)>0$,
the inequality
$$\|h\|_{L^p}\le C\|h\|_{L^q}^{1-\sigma}\|\nabla h\|_{L^r}^{\sigma},
		\qquad \mbox{for any} \quad h\in C^1_0(\mathbf{R}^n)$$
holds.
\end{lemma}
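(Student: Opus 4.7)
The plan is to reduce the statement to two classical ingredients: the sharp Sobolev endpoint inequality $\|h\|_{L^{r^{*}}} \le C\|\nabla h\|_{L^{r}}$ (where $1/r^{*} = 1/r - 1/n$, valid for $1 \le r < n$) together with the logarithmic convexity of $L^{p}$-norms. Indeed, the exponent identity in the hypothesis can be rewritten as
\begin{equation*}
\frac{1}{p} = \frac{\sigma}{r^{*}} + \frac{1-\sigma}{q},
\end{equation*}
which is exactly the interpolation relation between $L^{r^{*}}$ and $L^{q}$.

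First I would establish the endpoint Sobolev inequality. For $h \in C^{1}_{0}(\mathbf{R}^{n})$, this can be proved for $r=1$ by the standard Gagliardo argument, writing $h$ as a line integral of $\partial_{i}h$ in each coordinate direction and applying the Loomis-Whitney inequality to reach $\|h\|_{L^{n/(n-1)}} \le C\|\nabla h\|_{L^{1}}$. The general case $1 < r < n$ then follows by applying the $r=1$ inequality to $|h|^{\gamma}$ for an appropriate exponent $\gamma > 1$ and invoking H\"older's inequality on the right-hand side.

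Next, I would apply H\"older's inequality in the form of the interpolation inequality: if $1/p = \theta/p_{1} + (1-\theta)/p_{2}$ with $p_{1},p_{2} \in [1,\infty]$ and $\theta \in [0,1]$, then $\|h\|_{L^{p}} \le \|h\|_{L^{p_{1}}}^{\theta}\|h\|_{L^{p_{2}}}^{1-\theta}$; this is obtained by writing $|h|^{p} = |h|^{\theta p}\cdot|h|^{(1-\theta)p}$ and using the pair of conjugate exponents $p_{1}/(\theta p)$ and $p_{2}/((1-\theta)p)$. Setting $p_{1} = r^{*}$, $p_{2} = q$, and $\theta = \sigma$, and combining with the Sobolev endpoint inequality above, gives
\begin{equation*}
\|h\|_{L^{p}} \le \|h\|_{L^{r^{*}}}^{\sigma}\|h\|_{L^{q}}^{1-\sigma} \le C\|\nabla h\|_{L^{r}}^{\sigma}\|h\|_{L^{q}}^{1-\sigma},
\end{equation*}
which is the desired estimate.

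The only delicate point is the excluded endpoint $r = n$ (where $r^{*} = \infty$ is not accessible by this route for $n \ge 2$) and $p = \infty$, which is precisely why these cases are removed in the hypothesis. The boundary values $\sigma = 0$ (reducing to $\|h\|_{L^{q}} \le \|h\|_{L^{q}}$) and $\sigma = 1$ (reducing to pure Sobolev) are trivial or contained in the first step. Since the statement is the classical Gagliardo-Nirenberg inequality, no genuinely new obstacle arises; the main technical content is packaged entirely inside the endpoint Sobolev inequality, and the role of Lemma \ref{lem22} in the paper is as a black-box interpolation tool applied later to control $\|u\|_{L^{p+1}}$ in terms of $\|u\|_{L^{2}}$ and $\|\nabla u\|_{L^{2}}$.
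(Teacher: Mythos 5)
The paper offers no proof of this lemma at all: it is invoked as the classical Gagliardo--Nirenberg inequality and used as a black box in the estimate (\ref{eq230}) with $q=r=2$, so there is no in-paper argument to compare against. Your reduction --- the endpoint Sobolev inequality $\|h\|_{L^{r^*}}\le C\|\nabla h\|_{L^{r}}$ with $1/r^{*}=1/r-1/n$, obtained from the $r=1$ case via the line-integral/Loomis--Whitney argument and the substitution $h\mapsto |h|^{\gamma}$, followed by logarithmic convexity of the $L^{p}$-norms --- is the standard proof and is correct in the regime $1\le r<n$.

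There is, however, a genuine gap relative to the generality the lemma asserts and the paper needs. The hypotheses exclude only $p=\infty$ and $r=n$ (for $n\ge2$); they allow $r>n$, and this is not an idle case: the lemma is applied with $q=r=2$ for all admissible dimensions, so for $n=1$ one has $r>n$ (and for $n=2$ the formally excluded case $r=n$, which the paper uses anyway). When $r>n$ the quantity $1/r^{*}=1/r-1/n$ is negative, so $L^{r^{*}}$ is not a Lebesgue space, the identity $1/p=\sigma/r^{*}+(1-\sigma)/q$ is no longer an interpolation between two admissible exponents, and the first step of your argument simply does not exist. Covering this regime requires the opposite endpoint: a scale-invariant Morrey-type bound such as $\|h\|_{L^{\infty}}\le C\|h\|_{L^{r}}^{1-n/r}\|\nabla h\|_{L^{r}}^{n/r}$ for $r>n$ (in dimension one, the elementary $\|h\|_{L^{\infty}}^{2}\le 2\|h\|_{L^{2}}\|h'\|_{L^{2}}$), followed by interpolation of $L^{p}$ between $L^{q}$ and $L^{\infty}$ and, for $q\ne r$, an absorption step to trade $\|h\|_{L^{r}}$ for $\|h\|_{L^{q}}$. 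You flag $r=n$ and $p=\infty$ as the ``delicate points,'' but the case your argument actually fails to reach is $r>n$, which is neither excluded by the hypotheses nor avoidable in the paper's low-dimensional applications. As written, your proof establishes the lemma only for $1\le r<n$.
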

We first estimate $(t_0+t)^{B+1-\varepsilon}J(t;|u|^{p+1})$. From the above lemma, we have
\begin{eqnarray}
J(t;|u|^{p+1})&\le&C\left(\int_{\mathbf{R}^n}e^{\frac{4}{p+1}\psi}u^2dx\right)^{(1-\sigma)(p+1)/2}\nonumber\\
&&\times\left(\int_{\mathbf{R}^n}e^{\frac{4}{p+1}\psi}|\nabla\psi|^2u^2dx\right.\nonumber\\
		&&\qquad\qquad\left.+\int_{\mathbf{R}^n}e^{\frac{4}{p+1}\psi}|\nabla u|^2dx
				\right)^{\sigma(p+1)/2}\label{eq230}
\end{eqnarray}
with $\sigma=\frac{n(p-1)}{2(p+1)}$.
Since
\begin{eqnarray*}
e^{\frac{4}{p+1}\psi}u^2&=&(e^{2\psi}a(x)b(t)u^2)a(x)^{-1}b(t)^{-1}e^{\left(\frac{4}{p+1}-2\right)\psi}\\
&\le&C(e^{2\psi}a(x)b(t)u^2)\left[
		\left(\frac{\langle x\rangle^{2-\alpha}}{(1+t)^{1+\beta}}\right)^{\frac{\alpha}{2-\alpha}}
		e^{\left(\frac{4}{p+1}-2\right)\psi}\right]\\
		&&\qquad\qquad\times(1+t)^{\beta+(1+\beta)\alpha/(2-\alpha)}\\
&\le&C(1+t)^{\beta+(1+\beta)\alpha/(2-\alpha)}e^{2\psi}a(x)b(t)u^2
\end{eqnarray*}
and
\begin{eqnarray*}
e^{\frac{4}{(p+1)}\psi}|\nabla\psi|^2u^2&\le&
	C\frac{\langle x\rangle^{2-2\alpha}}{(1+t)^{2+2\beta}}
	e^{\frac{1}{2}\left(\frac{4}{p+1}-2\right)\psi}e^{\frac{1}{2}\left(\frac{4}{p+1}-2\right)\psi}e^{2\psi}u^2\\
&\le&Ce^{\frac{1}{2}\left(\frac{4}{p+1}-2\right)\psi}e^{2\psi}\left[
		\left(\frac{\langle x\rangle^{2-\alpha}}{(1+t)^{1+\beta}}\right)^{\frac{2-2\alpha}{2-\alpha}}
		e^{\frac{1}{2}\left(\frac{4}{p+1}-2\right)\psi}\right]\\
&&\times (1+t)^{-2(1+\beta)+(1+\beta)(2-2\alpha)/(2-\alpha)}u^2\\
&\le&C(1+t)^{-2(1+\beta)/(2-\alpha)}e^{\frac{1}{2}\left(\frac{4}{p+1}-2\right)\psi}e^{2\psi}u^2\\
&\le&C(1+t)^{-2(1+\beta)/(2-\alpha)}(1+t)^{\beta+(1+\beta)\alpha/(2-\alpha)}e^{2\psi}a(x)b(t)u^2,
\end{eqnarray*}
we can estimate (\ref{eq230}) as
\begin{eqnarray*}
J(t;|u|^{p+1})&\le&
C(1+t)^{[\beta+(1+\beta)\alpha/(2-\alpha)](1-\sigma)(p+1)/2}J(t;a(x)b(t)u^2)^{(1-\sigma)(p+1)/2}\nonumber\\
&&\times[(1+t)^{-1}
		J(t;a(x)b(t)u^2)+E(t)]^{\sigma(p+1)/2}
\end{eqnarray*}
and hence
\begin{equation*}
(t_0+t)^{B+1-\varepsilon}J(t;|u|^{p+1})
\le C\left((t_0+t)^{\gamma_1}M(t)^{(p+1)/2}+(t_0+t)^{\gamma_2}M(t)^{(p+1)/2}\right)\label{eq232},
\end{equation*}
where
\begin{eqnarray*}
	\gamma_1&=&B+1-\varepsilon+\left[\beta+(1+\beta)\frac{\alpha}{2-\alpha}\right]\frac{1-\sigma}{2}(p+1)
		-\frac{\sigma}{2}(p+1)\nonumber\\
		&&-(B-\varepsilon)\frac{p+1}{2},\nonumber\\
	\gamma_2&=&B+1-\varepsilon+\left[\beta+(1+\beta)\frac{\alpha}{2-\alpha}\right]\frac{1-\sigma}{2}(p+1)
		-(B-\varepsilon)\frac{1-\sigma}{2}(p+1)\nonumber\\
		&&-(B+1-\varepsilon)\frac{\sigma}{2}(p+1).
\end{eqnarray*}
By a simple calculation it follows that if
$$p>1+\frac{2}{n-\alpha},$$
then by taking $\varepsilon$ sufficiently small (i.e. $\delta$ sufficiently small)
both $\gamma_1$ and $\gamma_2$ are negative.
We note that
\begin{eqnarray*}
J_{\psi}(t;|u|^{p+1})&=&\int_{\mathbf{R}^n}e^{2\psi}(-\psi_t)|u|^{p+1}dx\\
&\le&\frac{C}{1+t}\int_{\mathbf{R}^n}e^{(2+\rho)\psi}|u|^{p+1}dx,
\end{eqnarray*}
where $\rho$ is a sufficiently small positive number.
Therefore, we can estimate the terms
$$
	\int_0^t(t_0+\tau)^{B+1-\varepsilon}J_{\psi}(\tau;|u|^{p+1})d\tau\quad
		\mbox{and}\quad \int_0^t(t_0+\tau)^{B-\varepsilon}J(\tau;|u|^{p+1})d\tau
$$
in the same manner as before.
Noting that
\begin{eqnarray*}
P_1+P_2&=&
\frac{d}{dt}\left[(t_0+t)^{\alpha+\beta}\int_{\Omega}e^{2\psi}F(u)dx
	+\int_{\Omega^c}e^{2\psi}\langle x\rangle_K^{\alpha+\beta}F(u)dx\right]\\
&&+C\int_{\Omega}e^{2\psi}(1+(t_0+t)^{\alpha+\beta}(-\psi_t))|u|^{p+1}dx\\
&&+C\int_{\Omega^c}e^{2\psi}(1+\langle x\rangle_K^{\alpha+\beta}(-\psi_t))|u|^{p+1}dx,
\end{eqnarray*}
we have
\begin{eqnarray*}
P&=&\int_0^t(t_0+\tau)^{B-\varepsilon}(P_1+P_2)d\tau\\
&\le&CI_0^2
	+C(t_0+t)^{B-\varepsilon}\int_{\Omega}e^{2\psi}(t_0+t)^{\alpha+\beta}F(u)dx\nonumber\\
	&&+C(t_0+t)^{B-\varepsilon}\int_{\Omega^c}e^{2\psi}\langle x\rangle_K^{\alpha+\beta}F(u)dx\nonumber\\
	&&+C\int_0^t(t_0+\tau)^{B-1-\varepsilon}
		\int_{\Omega}e^{2\psi}(t_0+\tau)^{\alpha+\beta}F(u)dxd\tau\nonumber\\
	&&+C\int_0^t(t_0+\tau)^{B-1-\varepsilon}
		\int_{\Omega^c}e^{2\psi}\langle x\rangle_K^{\alpha+\beta}F(u)dxd\tau\nonumber\\
	&&+C\int_0^t(t_0+\tau)^{B-\varepsilon}\int_{\Omega}e^{2\psi}
		(1+(t_0+\tau)^{\alpha+\beta}(-\psi_t))|u|^{p+1}dxd\tau\nonumber\\
	&&+C\int_0^t(t_0+\tau)^{B-\varepsilon}
		\int_{\Omega^c}e^{2\psi}(1+\langle x\rangle_K^{\alpha+\beta}(-\psi_t))|u|^{p+1}dxd\tau.\nonumber
\end{eqnarray*}
We calculate
\begin{eqnarray*}
e^{2\psi}\langle x\rangle_K^{\alpha+\beta}&=&
e^{2A\frac{\langle x\rangle^{2-\alpha}}{(1+t)^{1+\beta}}}\langle x\rangle_K^{\alpha+\beta}\\
&\le&Ce^{2A\frac{\langle x\rangle^{2-\alpha}}{(1+t)^{1+\beta}}}
	\left(\frac{\langle x\rangle^{2-\alpha}}{(1+t)^{1+\beta}}\right)^{\frac{\alpha+\beta}{2-\alpha}}
		(1+t)^{\frac{(\alpha+\beta)(1+\beta)}{2-\alpha}}\\
&\le&Ce^{(2+\rho)\psi}(1+t)^{\frac{(\alpha+\beta)(1+\beta)}{2-\alpha}}
\end{eqnarray*}
for small $\rho >0$.
Noting that $\frac{(\alpha+\beta)(1+\beta)}{2-\alpha}<1$ and taking $\rho$ sufficiently small,
we can estimate the terms $P$ in the same manner as estimating $(t_0+t)^{B+1-\varepsilon}J(t;|u|^{p+1})$.
Consequently, we have a priori estimate for $M(t)$:
$$M(t)\le CI_0^2+CM(t)^{(p+1)/2}.$$
This shows that the local solution of (\ref{eq11}) can be extended globally.
We note that
$$
	e^{2\psi}a(x)b(t)\ge c(1+t)^{-(1+\beta)\frac{\alpha}{2-\alpha}-\beta}
$$
with some constant $c>0$.
Then we have
\begin{equation}
\label{eq231}
	\int_{\mathbf{R}^n}e^{2\psi}a(x)b(t)u^2dx
	\ge c(1+t)^{-(1+\beta)\frac{\alpha}{2-\alpha}-\beta}\int_{\mathbf{R}^n}u^2dx.
\end{equation}
This implies the decay estimate of global solution (\ref{eq18}) and completes 
the proof of Theorem \ref{thm1}.

\begin{proof}[Proof of Corollary \ref{cor13}]
In a similar way to derive (\ref{eq231}), we have
$$\int_{\mathbf{R}^n}e^{2\psi}a(x)b(t)u^2dx
	\ge c(1+t)^{-(1+\beta)\frac{\alpha}{2-\alpha}-\beta}\int_{\mathbf{R}^n}
	e^{(2A-\mu)\frac{\langle x\rangle^{2-\alpha}}{(1+t)^{\beta}}}u^2dx.$$
By noting that
$$\frac{\langle x\rangle^{2-\alpha}}{(1+t)^{1+\beta}}\ge (1+t)^{\rho}$$
on $\Omega_{\rho}(t)$ and Theorem \ref{thm1}, it follows that
\begin{eqnarray*}
\lefteqn{(1+t)^{-(1+\beta)\frac{\alpha}{2-\alpha}-\beta}\int_{\Omega_{\rho}(t)}
		e^{(2A-\mu)(1+t)^{\rho}}(u_t^2+|\nabla u|^2+u^2)dx}\\
&\le&C(1+t)^{-(1+\beta)\frac{\alpha}{2-\alpha}-\beta}\int_{\Omega_{\rho}(t)}
		e^{(2A-\mu)\frac{\langle x\rangle^{2-\alpha}}{(1+t)^{\beta}}}(u_t^2+|\nabla u|^2+u^2)dx\\
&\le&C\int_{\mathbf{R}^n}e^{2\psi}(u_t^2+|\nabla u|^2+a(x)b(t)u^2)dx\\
&\le&C(1+t)^{-B+\varepsilon}.
\end{eqnarray*}
Thus, we obtain
$$\int_{\Omega_{\rho}(t)}(u_t^2+|\nabla u|^2+u^2)dx\le C(1+t)^{-\frac{(1+\beta)(n-2\alpha)}{2-\alpha}+\varepsilon}
		e^{-(2A-\mu)(1+t)^{\rho}}.$$
This proves Corollary \ref{cor13}.
\end{proof}

\begin{center}
Acknowledgement
\end{center}
The author is deeply grateful to Professors Ryo Ikehata, Kenji Nishihara, Tatsuo Nishitani, Akitaka Matsumura
and Michael Reissig.
They gave me constructive comments and warm encouragement again and again.

\end{document}